\documentclass[reqno,12pt,draft]{amsart}
\usepackage[margin=1in]{geometry}
\usepackage{amssymb,upref,cite,enumerate}

\newcommand{\bb}{\mathbb}
\newcommand{\bbN}{{\mathbb{N}}}
\newcommand{\bbR}{{\mathbb{R}}}

\newcommand{\bbZ}{{\mathbb{Z}}}
\newcommand{\bbC}{{\mathbb{C}}}

\newcommand{\calH}{{\mathcal H}}

\newcommand{\calJ}{{\mathcal J}}
\newcommand{\calK}{{\mathcal K}}
\newcommand{\calL}{{\mathcal L}}



\newcommand{\no}{\nonumber}
\newcommand{\lb}{\label}
\newcommand{\f}{\frac}

\newcommand{\ol}{\overline}

\newcommand{\la}{\lambda}
\newcommand{\al}{\alpha}

\newcommand{\eps}{\varepsilon}

\newcommand{\rank}{\text{\rm{rank}}}

\newcommand{\bi}{\bibitem}

\newcommand{\beq}{\begin{equation}}
\newcommand{\eeq}{\end{equation}}
\newcommand{\ba}{\begin{align}}
\newcommand{\ea}{\end{align}}


\renewcommand{\Re}{\text{\rm Re}}
\renewcommand{\Im}{\text{\rm Im}}
\renewcommand{\ln}{\text{\rm ln}}

\DeclareMathOperator{\tr}{tr}

\allowdisplaybreaks
\numberwithin{equation}{section}

\newtheorem{theorem}{Theorem}[section]
\newtheorem{lemma}[theorem]{Lemma}

\theoremstyle{definition}
\newtheorem{definition}[theorem]{Definition}
\newtheorem{example}[theorem]{Example}

\theoremstyle{remark}
\newtheorem{remark}[theorem]{Remark}


\begin{document}
\title[Perturbation determinant]{On a perturbation determinant for accumulative operators}

\author[K.\ A.\ Makarov]{Konstantin A.\ Makarov}
\address{K.\ A.\ Makarov, Department of Mathematics, University of Missouri, Columbia, MO 65211, USA}
\email{makarovk@missouri.edu}

\author[A.\ Skripka]{Anna Skripka$^*$}
\address{A.\ Skripka, Department of Mathematics and Statistics,
	University of New Mexico, Albuquerque, NM 87131, USA}
\email{skripka@math.unm.edu}

\author[M.\ Zinchenko]{Maxim Zinchenko$^\dagger$}
\address{M.\ Zinchenko, Department of Mathematics and Statistics,
	University of New Mexico, Albuquerque, NM 87131, USA}
\email{maxim@math.unm.edu}

\thanks{\footnotesize ${}^*$Research supported in part by National Science Foundation grant DMS--1249186.
\\
${}^\dagger$Research supported in part by Simons Foundation grant CGM--281971.}

\subjclass{Primary 47B44, 47A10; Secondary 47A20, 47A40}


\begin{abstract}
For a purely imaginary sign-definite perturbation of a self-adjoint operator, we obtain exponential representations for the perturbation determinant in both upper and lower half-planes  and derive  respective trace formulas.
\end{abstract}

\maketitle



\section{Introduction}

The main goal of this note is to obtain new exponential representations for the perturbation determinant associated with a purely imaginary sign-definite perturbation of a self-adjoint operator.

Let $\bbC_\pm=\{z\in\bbC \,|\, \Im(z) \gtrless 0\}$. The starting point of our consideration is the exponential representation
(see \cite[Lemma 6.5]{NM}),
\begin{equation}
\label{introdet}
{\det}_{H/H_0}(z)=\exp\bigg (\frac{1}{\pi i} \int \frac{\zeta(\lambda)}{\lambda- z}\,d\lambda\bigg),\quad z\in\bbC_+,
\end{equation}
for the  perturbation determinant ${\det}_{H/H_0}(z)=\det((H-z)(H_0-z)^{-1})$ associated with a self-adjoint operator $H_0$ and an accumulative operator $H=H_0-iV$, where $V\geq 0$ is an element of the trace class $S^1$. Here the  nonnegative function $\zeta\in L^1(\bbR)$ is given by
\begin{equation*}
\zeta(\lambda)=\lim_{\varepsilon\to 0^+} \log \big
|{\det}_{H/H_0}(\lambda+i\varepsilon)\big|\;\text{ for a.e. } \lambda\in \bbR.
\end{equation*}
(See also Theorem~6.6 as well as Lemma~5.6 and Theorem~5.7 in \cite{NM} for general additive and some singular non-additive perturbation results, respectively.)
In Theorem \ref{uhpthm}, we give a new proof of \eqref{introdet}
and, in Theorem \ref{lhpl}, we obtain a complementary exponential representation for ${\det}_{H/H_0}(z)$ in $\bbC_-$.

Introducing the spectral shift function  $\xi(\lambda)$  via the boundary value of the argument of the perturbation determinant
\[
\xi(\lambda)=\frac1\pi\lim_{\varepsilon\to 0^+}\arg({\det}_{H/H_0}(\lambda+i\varepsilon)) \;\text{ for a.e. } \lambda\in \bbR,
\]
we show (see Theorem \ref{notl1}) that  $\xi $ is {\it never} integrable whenever $H\neq H^*$; in fact, $\xi$ is not even in $L^1_{w,0}(\bbR)$ (see \eqref{L1w0} for the definition of the weak zero space), but instead $\xi\in L^1_{w,0}(\bbR;\frac{d\la}{1+\la^2})$. By switching from Lebesgue integration to integration of type (A), we reconstruct the perturbation determinant from $\xi$ in $\bbC_+$ (with formula mimicking the self-adjoint case) in Theorem \ref{logdet}.

Using the aforementioned representations for the perturbation determinant,
in Theorem \ref{thmTF}, we obtain a trace formula for rational functions vanishing at infinity with poles in both $\bbC_-$ and $\bbC_+$, which is an analog of a trace formula for contractions derived in \cite{R87,R89,R94}.
Our approach to accumulative operators is based on complex and harmonic analysis, but, as distinct from \cite{R87,R89,R94}, does not appeal to functional models of accumulative operators.

For the history (up to 1990) of perturbation determinants and associated trace formulas in the non-self-adjoint setting we refer to \cite{AdN90} where contributions to the field by
H. Langer \cite{L65},
L.~A.~Sahnovi\v{c} \cite{Sah68},
R.~V.~Akopjan \cite{Ak73,Ak83},
P.~Jonas \cite{J86, J87},
V.~A.~Adamjan and B.~S.~Pavlov \cite{AdP79},
A.~V.~Rybkin \cite{R84,R87,R89},
M.~G.~Krein \cite{Kr89},
H.~Neidhardt \cite{N87,N88}
are discussed in detail; for recent developments see \cite{R94,R96,dsD,NM}. References to partial results for accumulative operators are also given in Remark \ref{partial}. We also remark that various concepts of generalized integration, including the Kolmogorov-Titchmarsh $A$-integral, appeared to be rather useful in harmonic analysis
\cite{T29}, probability theory \cite{K33}, as well as in perturbation theory for non-self-adjoint operators. In particular, the concept of $A$-integral has been systematically used for trace formulas associated with contractive trace class and special cases of Hilbert--Schmidt perturbations of a unitary operator (see \cite{R87,R89,R94,R96} and the references therein).

We recall that a closed densely defined operator $A$ is called accumulative if $\Im\left<Ah,h\right>\leq 0$ for every $h$ in the domain of $A$. Throughout the paper, we assume that every accumulative operator is maximal, which guarantees that $\bbC_+$ is a subset of the resolvent set of $A$ \cite{Ph59,Kr89}.

\section{Herglotz and outer functions}

We recall the canonical inner-outer factorization  theorem for the Hardy \\classes
$H_p$, $0<p\le \infty$, in the upper half-plane \cite[Chapter~VI:C, p.~119]{Ko08}.

\begin{theorem}\label{factth}
If $0\not\equiv F\in H_p(\bbC_+)$, $0<p\le \infty$, then
\[F(z)=I_F(z)\cdot O_F(z), \quad z\in\bbC_+,\]
where
\begin{enumerate}
\item $I_F$ is the inner factor of $F$ given by
\[I_F(z)=e^{i\gamma+i\alpha z}B(z)\exp\left(\frac{i}{\pi}\int_\bbR
\left(\frac{1}{\lambda-z}-\frac{\lambda}{1+\lambda^2}\right)d\mu_{\text{sing}}(\lambda)\right),\]
with
\begin{enumerate}
\item $\gamma\in\bbR$, $\alpha\ge 0$,

\item  a Blaschke product
\begin{equation}\label{blaschke}
B(z)=\prod_{k=1}^\infty\left(e^{i\alpha_k}\frac{z-z_k}{z-\overline{z_k}}\right),
\end{equation}
where
$z_k$ are the zeros of $F(z)$ in $\bbC_+$ and $\alpha_k\in\bbR$ are chosen so that $e^{i\alpha_k}\frac{i-z_k}{i-\overline{z_k}}\ge 0$,

\item $\mu_{\text{sing}}\ge0$ a singular measure on $\bbR$ satisfying
$\int_\bbR\frac{d\mu_{\text{sing}}(\lambda)}{1+\lambda^2} <\infty$,
\end{enumerate}

\item $O_F$ is the outer factor of $F$ given by
\begin{align}\lb{OuterFn}
O_F(z)=\exp \left(\frac{1}{\pi i}\int_\bbR \left(\frac{1}{\lambda-z}-\frac{\lambda}{1+\lambda^2}\right)\log |F(\lambda+i0)|\,d\lambda\right).
\end{align}
\end{enumerate}
\end{theorem}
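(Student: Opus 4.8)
The plan is to reduce the statement to the classical Riesz--Smirnov factorization on the unit disk $\mathbb{D}$ via the Cayley transform, and then read off the explicit half-plane formulas. First I would fix the conformal map $\varphi(z)=(z-i)/(z+i)$ carrying $\bbC_+$ onto $\mathbb{D}$, with inverse $\psi(w)=i(1+w)/(1-w)$, and associate to $F\in H_p(\bbC_+)$ the function $G(w)=(1-w)^{2/p}\,F(\psi(w))$ on $\mathbb{D}$ (interpreting $(1-w)^{2/p}\equiv 1$ when $p=\infty$); the standard change-of-variables computation for the defining integrals shows $G\in H_p(\mathbb{D})$ with $G\not\equiv 0$. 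The disk theory then yields $G=c\,B_{\mathbb{D}}\,S_{\mathbb{D}}\,O_{\mathbb{D}}$, where $|c|=1$, $B_{\mathbb{D}}$ is the Blaschke product over the zeros of $G$, $S_{\mathbb{D}}(w)=\exp\big(-\int \frac{e^{i\theta}+w}{e^{i\theta}-w}\,d\nu(\theta)\big)$ with $\nu\ge 0$ singular on $\partial\mathbb{D}$, and $O_{\mathbb{D}}$ is outer.

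Next I would transport each factor back to $\bbC_+$. The zeros of $G$ in $\mathbb{D}$ correspond to the zeros $z_k$ of $F$ in $\bbC_+$, together with a possible zero at $w=1$; composing $B_{\mathbb{D}}$ with $\varphi$ and renormalizing the unimodular constants produces exactly the Blaschke product \eqref{blaschke}, whose convergence is equivalent to the disk condition $\sum_k(1-|\varphi(z_k)|)<\infty$, i.e.\ $\sum_k \Im(z_k)/(1+|z_k|^2)<\infty$. For the singular inner factor $S_{\mathbb{D}}$ one separates the part of $\nu$ carried by the point $\theta=0$, which is the preimage of $w=1$ and hence the point at infinity for $\bbC_+$: this atom pushes forward to the exponential $e^{i\alpha z}$ with $\alpha\ge 0$, while the rest of $\nu$ pushes forward to a positive singular measure $\mu_{\text{sing}}$ on $\bbR$ with $\int \frac{d\mu_{\text{sing}}(\lambda)}{1+\lambda^2}<\infty$, and the half-plane Poisson and conjugate-Poisson kernels convert the disk integral into $\frac{i}{\pi}\int_\bbR\big(\frac{1}{\lambda-z}-\frac{\lambda}{1+\lambda^2}\big)\,d\mu_{\text{sing}}(\lambda)$, the subtracted term $\frac{\lambda}{1+\lambda^2}$ and the constant $e^{i\gamma}$ absorbing the normalization. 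Applying the same substitution to $O_{\mathbb{D}}$, and using that the boundary modulus of $G$ is $|1-e^{i\theta}|^{2/p}$ times that of $F$ together with the identity relating arc length on $\partial\mathbb{D}$ to Lebesgue measure on $\bbR$, yields \eqref{OuterFn}; this step in particular requires $\log|F(\lambda+i0)|\in L^1(\bbR;\frac{d\lambda}{1+\lambda^2})$, which is precisely the disk statement $\log|G|\in L^1(\partial\mathbb{D})$.

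The routine but delicate bookkeeping concerns the conformal factor $(1-w)^{2/p}$ and the point $w=1$: one must check that this weight contributes nothing to the inner part beyond a shift of $\alpha$ and $\gamma$ (since $|1-e^{i\theta}|^{2/p}$ is itself the boundary modulus of an outer function), and that an eventual zero of $G$ at $w=1$, or extra mass of $\nu$ at $\theta=0$, is exactly what is encoded by allowing $\alpha>0$. The genuine analytic input --- nontangential boundary values almost everywhere, the subharmonicity of $\log|F|$ and its majorization in $L^1(\frac{d\lambda}{1+\lambda^2})$, and the canonical factorization itself --- is inherited wholesale from the disk, so the real content is verifying that the Cayley transform identifies $H_p(\bbC_+)$ with a weighted copy of $H_p(\mathbb{D})$ and carries the three inner factors and the outer factor to the claimed forms. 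I expect the transformation of the singular measure, and in particular the clean isolation of the atom at infinity giving rise to the $e^{i\alpha z}$ factor, to be the step requiring the most care.
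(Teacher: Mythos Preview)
The paper does not prove this theorem at all: it is stated as background material and attributed to Koosis \cite[Chapter~VI:C, p.~119]{Ko08}, so there is no ``paper's own proof'' to compare against. Your plan --- transfer to the disk via the Cayley transform, apply the Riesz--Smirnov factorization there, and transport the Blaschke, singular-inner, and outer factors back to $\bbC_+$, isolating the atom of the singular measure at the boundary point corresponding to $\infty$ as the exponential $e^{i\alpha z}$ --- is exactly the standard textbook route (and is, in outline, how Koosis himself handles it).

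One small slip: you mention ``a possible zero at $w=1$'' among the zeros of $G$ in $\mathbb{D}$, but $w=1$ lies on $\partial\mathbb{D}$, so it never enters the Blaschke product; the factor $(1-w)^{2/p}$ is zero-free in the open disk and is itself outer, so it contributes only to the outer part and to the unimodular constant. With that correction the bookkeeping you describe goes through, and your identification of the delicate step (the push-forward of the singular measure and the separation of the point mass at infinity) is accurate.
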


\begin{remark}\label{remcon}
(i) We have the Blaschke condition \cite[Chapter~VI:C]{Ko08}
\begin{align}
\label{BC}
\sum_{k=1}^\infty\frac{\Im(z_k)}{|z-z_k|^2}<\infty,\quad z\in\bbC\setminus(\bbR\cup\{z_k\}_{k=1}^\infty).
\end{align}
(ii) If, in addition, $|F(z)|\le 1$, for $z\in\bbC_+$, then $F$ can be factorized as
$$
F(z)=B(z)\exp\bigg (\frac{i}{\pi}M(z)\bigg ),
$$
where $B(z)$ is the Blaschke product \eqref{blaschke} and $M(z)$ is the Herglotz function
$$
M(z)=\pi\alpha z +\pi\gamma +\int_\bbR \left(\frac{1}{\lambda-z}-\frac{\lambda}{1+\lambda^2}\right) d\mu(\lambda),\quad \mu\ge 0.
$$
(See, e.g., \cite[Chapter~VI, Section~59, Theorem~2]{Akhiezer} for representations of Herglotz functions.)
\end{remark}

\begin{definition}
We say that $F$ is an outer function if $F$ is analytic on $\bbC_+$, $|F|$ has finite boundary values a.e.\ on $\bbR$, $\log |F|\in L^1(\bbR;\frac{d\lambda}{1+\lambda^2})$, and for some $\theta \in \bbR$,
$$
F(z)=e^{i\theta} \, O_F(z), \quad z\in\bbC_+,
$$
where $O_F(z)$ is given by \eqref{OuterFn}.
\end{definition}

\begin{theorem}\label{outer} If $M(z)$ is a Herglotz function,
 then the function $1 -iM(z)$ is outer in $\bbC_+$.
\end{theorem}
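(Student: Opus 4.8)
The plan is to show that $F(z)=1-iM(z)$ has no zeros in $\bbC_+$, no singular inner part, no mass at infinity, and that its boundary modulus controls it — i.e.\ that it coincides with its own outer factor up to a unimodular constant. Since a Herglotz function maps $\bbC_+$ into $\overline{\bbC_+}$, we have $\Im M(z)\ge 0$, hence $\Re(1-iM(z))=1+\Im M(z)\ge 1>0$ throughout $\bbC_+$. In particular $F$ is zero-free on $\bbC_+$, so its Blaschke product is trivial, and $\log F$ admits a single-valued analytic branch on the simply connected domain $\bbC_+$ with $|\arg F|<\pi/2$.

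\smallskip
Next I would establish the Hardy-class membership needed to invoke Theorem~\ref{factth}. From $\Re F\ge 1$ one gets $|F(z)|\ge 1$, and more importantly $\Re(1/F(z))=\Re F(z)/|F(z)|^2 \in (0,1]$, so $1/F$ is a bounded analytic function on $\bbC_+$ with positive real part; thus $1/F\in H_\infty(\bbC_+)$ and, being bounded, has the factorization of Theorem~\ref{factth} with outer factor $O_{1/F}$. It remains to rule out a nontrivial inner factor for $1/F$. The Blaschke factor is trivial since $1/F$ is zero-free (as $F$ is finite). To kill the exponential factor $e^{i\gamma+i\alpha z}$ and the singular measure, the clean route is to use the known representation of the Herglotz function $M$ itself from Remark~\ref{remcon}(i)–(ii): writing $M(z)=\pi\alpha_M z+\pi\gamma_M+\int_\bbR\big(\frac{1}{\lambda-z}-\frac{\lambda}{1+\lambda^2}\big)\,d\mu(\lambda)$ with $\alpha_M\ge0$, $\mu\ge0$, one checks that $1-iM(z)$ tends to $1$ (if $\alpha_M=0$) or blows up linearly (if $\alpha_M>0$) as $z\to\infty$ nontangentially, and similarly controls its behavior; but since we only need $1/F$ bounded, the point is that $1/F\to$ a finite nonzero limit or $0$ along the imaginary axis at a rate incompatible with $e^{i\alpha z}$ having $\alpha>0$ unless $\alpha=0$, and the singular measure of $1/F$ must vanish because $\log|1/F|$ already accounts for all of $\arg(1/F)$ via the conjugate-function relation on the simply connected range where $1/F$ avoids $(-\infty,0]$.

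\smallskip
Concretely, the cleanest argument avoids dissecting $1/F$ and instead works with $F$ directly: since $F$ is zero-free on $\bbC_+$ with $\Re F>0$, the function $\log F$ (principal branch, $|\Im\log F|<\pi/2$) is Herglotz-adjacent — in fact $-i\log F$ has nonnegative real part is false, but $\log F$ maps $\bbC_+$ into the strip $\{|\Im w|<\pi/2\}$, so $e^{-\log F}=1/F$ and $e^{\log F}=F$ are both in the Smirnov class $N^+(\bbC_+)$; and a Smirnov-class function whose boundary modulus satisfies $\log|F(\lambda+i0)|=\log|1-iM(\lambda+i0)|\in L^1(\bbR;\frac{d\lambda}{1+\lambda^2})$ (which holds because $\log^+|F|\le \log^+(1+|M|)$ is controlled by the Herglotz bound $\int \frac{\log^+|M(\lambda+i0)|}{1+\lambda^2}d\lambda<\infty$, and $\log^-|F|\le 0$ since $|F|\ge1$) is automatically equal to its outer part times a unimodular constant. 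Then $F=e^{i\theta}O_F$ and one reads off $\theta$ from the normalization $F(i)=1-iM(i)$ together with the value $O_F(i)>0$; this is exactly the assertion that $F$ is outer.

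\smallskip
The main obstacle is the verification that $F$ has no singular inner factor and no exponential factor $e^{i\alpha z}$ — equivalently, that $F\in N^+$ rather than merely $N$. I expect to handle this via the Smirnov maximum principle: a function $g$ analytic on $\bbC_+$ with $\Re g\ge c>0$ lies in $N^+$ because $1/g$ is bounded and hence in $N^+$, and $N^+$ is closed under such reciprocals when the reciprocal is bounded; applying this to $g=F$ (using $\Re F\ge1$) gives $F=1/(1/F)\in N^+$ since $1/F\in H_\infty\subset N^+$ and $N^+$ functions with no zeros have reciprocals in $N^+$. The remaining bookkeeping — confirming $\log|F(\lambda+i0)|\in L^1(\bbR;\frac{d\lambda}{1+\lambda^2})$ and identifying the unimodular constant — is routine given the Herglotz representation of $M$ in Remark~\ref{remcon}.
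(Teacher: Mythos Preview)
Your approach is in spirit the paper's short first proof: pass to $1/F=(1-iM)^{-1}$, observe it is bounded with positive real part, and conclude outerness. But the execution has a genuine gap. You assert that ``$N^+$ functions with no zeros have reciprocals in $N^+$,'' and separately that ``a Smirnov-class function whose boundary modulus satisfies $\log|F(\lambda+i0)|\in L^1(\bbR;\frac{d\lambda}{1+\lambda^2})$ is automatically equal to its outer part times a unimodular constant.'' Both statements are false, with the same counterexample: any singular inner function $S$ lies in $H^\infty\subset N^+$, has no zeros, satisfies $\log|S(\lambda+i0)|=0\in L^1$, yet $S$ is not outer and $1/S\notin N^+$. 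So zero-freeness, Smirnov-class membership, and the $L^1$ boundary condition together do \emph{not} exclude a singular inner factor --- which is precisely the obstruction you must eliminate. Your $N^+$ argument is therefore circular: it presupposes what it is meant to prove.

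What actually closes the gap is the fact you compute but never exploit: $\Re(1/F)=\Re F/|F|^2\in(0,1]$, so $1/F$ is a bounded analytic function with positive real part. The paper's first proof simply cites \cite[Corollary~4.8(a)]{G} for the implication ``positive real part $\Rightarrow$ outer'' and is done, since the reciprocal of an outer function is outer. The paper also supplies a second, self-contained proof: write $(1-iM)^{-1}=\exp(\frac{i}{\pi}N)$ with $N$ Herglotz, use the Aronszajn--Donoghue exponential Herglotz representation of $i(1-iM)$ to produce an absolutely continuous Herglotz function $L$ with no linear term, and then apply the brothers Riesz theorem to the identity $N-iL=\text{const}$ to force $N$ to be absolutely continuous with no linear term as well --- which is exactly outerness. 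Either route replaces your unjustified $N^+$ step.
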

\begin{proof}
The $H^\infty$-function  $F(z)=(1 -iM(z))^{-1}$ has non-negative real part in the upper half-plane $\bbC_+$. By \cite[Corollary 4.8 (a)]{G}, $F(z)$ is an outer function, so is  $F^{-1}(z)=1 -iM(z)$.
\end{proof}

\begin{proof}[Second proof]
 Since the function
$(1-iM(z))^{-1}$ is an analytic   contractive function with no zeros
in the upper half-plane
 by Remark \ref{remcon}, we have the representation
$$
(1-iM(z))^{-1}=\exp\bigg (\frac{i}{\pi}N(z)\bigg ),
$$
where $N(z)$ is a Herglotz function.
%
Next, the function $i(1-iM(z))$ is also Herglotz. Therefore, by the Aronszajn-Donoghue exponential Herglotz representation theorem (see, e.g., \cite[Theorem~2.4]{GT}),
$$
i(1-iM(z))=\exp \bigg (\frac{1}{\pi}L(z)\bigg )
$$
for some  absolutely continuously represented Herglotz function  $L(z)$ without the linear term.
Hence,
$$
N(z)-iL(z)= 2k\pi^2+\frac{\pi^2}{2},\quad \text{ for some } k\in \bbZ.
$$
Since $L$ has no linear term, so does $N$. Applying a variant of the  brothers Riesz's theorem
for  the upper  half-plane that states that
if a complex-valued finite Borel measure $\mu$ on $\bbR$ satisfies
$$
\int_\bbR \frac{1+z\lambda}{\lambda-z}\,d\mu(\lambda)=Az+B,
$$
for all $z\in\bbC_+$ and some $A, B\in \bbC$, then $A=0$ and $\mu$ is absolutely continuous,
yields the representation
 $$N(z)=\gamma+\int\left (\frac{1}{\lambda-z}-\frac{\lambda}{1+\lambda^2}
\right )d\nu(\lambda),
$$
for some $\gamma\in \bbR$ and some absolutely continuous measure $\nu$ such that
$$
\int_\bbR\frac{d\nu(\lambda)}{1+\lambda^2}<\infty.
$$
Now, the exponential representation
$$
(1-iM(z))=\exp\bigg (-\frac{i}{\pi}N(z)\bigg )
$$
shows that
$
1-iM(z)$
is an outer function.
\end{proof}

\section{Exponential representation for the perturbation determinant.}

The main goal of this section is to obtain representations for the perturbation determinant associated with an accumulative trace class perturbation of a self-adjoint operator.  As distinct from perturbation theory for self-adjoint operators, initial exponential  representations for the perturbation determinant appear to be quite different in $\bbC_-$ and $\bbC_+$.

We start with the case of the upper half-plane and show that the perturbation determinant is an outer function in $\bbC_+$ by reducing the general case to the case of rank-one perturbations and obtain an exponential representation for it.

\begin{lemma}\label{l1} Let $H_0$ be a maximal accumulative operator, $\alpha >0$, $P$ a one-dimensional orthogonal  projection, and let $H=H_0-i\alpha  P$. Then, the perturbation determinant ${\det}_{H/H_0}(z)$ is an outer function in the upper half-plane.
Moreover,
$$
{\det}_{H/H_0}(z)=\exp\bigg (\frac{1}{\pi i} \int \frac{\zeta(\lambda)}{\lambda-z}\,d\lambda\bigg), \quad z\in\bbC_+,
$$
where $\zeta\in L^1(\bbR)$ is given by
\begin{equation}\label{lapunov}
\zeta(\lambda)=\lim_{\varepsilon\to 0^+} \log
|{\det}_{H/H_0}(\lambda+i\varepsilon)|\ge 0,\quad  \text {a.e.}\,\, \lambda\in \bbR,
\end{equation}
and
\begin{equation}\label{norma}
\| \zeta\|_{L^1}=\pi \alpha.
\end{equation}

\end{lemma}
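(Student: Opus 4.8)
The plan is to reduce the claim to a computation for rank-one perturbations of a scalar Herglotz function. Since $P$ is a one-dimensional orthogonal projection, write $P = \langle \,\cdot\,, \varphi\rangle \varphi$ for a unit vector $\varphi$. The key identity is the standard rank-one formula for perturbation determinants,
$$
{\det}_{H/H_0}(z) = 1 - i\alpha \big\langle (H_0 - z)^{-1}\varphi, \varphi \big\rangle, \quad z \in \bbC_+,
$$
which follows from $\det(I + AB) = \det(I + BA)$ together with $H - z = (H_0 - z)\big(I - i\alpha (H_0 - z)^{-1} P\big)$. Set $m(z) = \langle (H_0 - z)^{-1}\varphi, \varphi\rangle$. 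Because $H_0$ is maximal accumulative, $\bbC_+$ lies in its resolvent set and $m$ maps $\bbC_+$ into the closed lower half-plane (indeed $\Im m(z) \le 0$ there, with $\Im\langle (H_0-z)h,h\rangle \le \Im(z)\|h\|^2$ giving the sign), so $M(z) := -\alpha\, m(z)$ is a Herglotz function on $\bbC_+$. Hence ${\det}_{H/H_0}(z) = 1 - iM(z)$ with $M$ Herglotz, and Theorem~\ref{outer} immediately yields that ${\det}_{H/H_0}$ is outer in $\bbC_+$.

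Given outerness, by the definition of an outer function and formula \eqref{OuterFn} we have ${\det}_{H/H_0}(z) = e^{i\theta} O_F(z)$ for some $\theta \in \bbR$, where $F = {\det}_{H/H_0}$ and
$$
O_F(z) = \exp\left( \frac{1}{\pi i} \int_\bbR \left( \frac{1}{\lambda - z} - \frac{\lambda}{1+\lambda^2} \right) \log|F(\lambda + i0)|\, d\lambda \right).
$$
To pin down $\theta = 0$ and to convert the Herglotz-type kernel into the plain Cauchy kernel $\tfrac{1}{\lambda - z}$, I will examine the behavior as $z \to i\infty$ along the imaginary axis: $m(z) \to 0$ (since $\|(H_0 - z)^{-1}\| \to 0$), so ${\det}_{H/H_0}(z) \to 1$, which forces $e^{i\theta} \exp\big(-\tfrac{1}{\pi i}\int \tfrac{\lambda}{1+\lambda^2}\zeta(\lambda)\,d\lambda\big) = 1$ once we know $\zeta \in L^1(\bbR)$; combined with the fact that $\theta$ and the integral are real and the exponent is purely imaginary, this gives the stated formula with the single Cauchy kernel after absorbing the $\tfrac{\lambda}{1+\lambda^2}$ term. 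Here $\zeta(\lambda) = \log|{\det}_{H/H_0}(\lambda + i0)| = \log|1 - iM(\lambda + i0)| \ge 0$ because $\Re(1 - iM(\lambda+i0)) = 1 + \Im M(\lambda+i0) \ge 1$ by the Herglotz property (boundary values of $\Im M$ are nonnegative a.e.), which proves the sign assertion in \eqref{lapunov}.

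The remaining and most delicate point is the norm identity \eqref{norma}, $\|\zeta\|_{L^1} = \pi\alpha$. The idea is to compute $\int_\bbR \zeta(\lambda)\,d\lambda$ by evaluating the exponential representation asymptotically: from ${\det}_{H/H_0}(z) = \exp\big(\tfrac{1}{\pi i}\int \tfrac{\zeta(\lambda)}{\lambda - z}\,d\lambda\big)$ we get, as $z = iy$ with $y \to +\infty$,
$$
\frac{1}{\pi i} \int_\bbR \frac{\zeta(\lambda)}{\lambda - iy}\, d\lambda = -\frac{1}{\pi y} \int_\bbR \zeta(\lambda)\, d\lambda + o(1/y),
$$
so $\log {\det}_{H/H_0}(iy) = -\tfrac{1}{\pi y}\|\zeta\|_{L^1} + o(1/y)$. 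On the other hand, ${\det}_{H/H_0}(iy) = 1 - i\alpha\, m(iy)$ and $m(iy) = \langle(H_0 - iy)^{-1}\varphi,\varphi\rangle$. Writing $H_0 = A - iB$ with $A$ self-adjoint and $B \ge 0$ (the accumulative part), or more robustly using that $\tr\big((H_0 - iy)^{-1} - (H_0^* + iy)^{-1}\big)$-type quantities control the imaginary part, one finds $-i\alpha\, m(iy) = -\tfrac{\alpha}{y} + o(1/y)$ — the leading term comes from $(H_0 - iy)^{-1} = \tfrac{i}{y} + O(1/y^2)$ in the strong sense, giving $m(iy) = \tfrac{i}{y}\|\varphi\|^2 + o(1/y) = \tfrac{i}{y} + o(1/y)$. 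Therefore $\log{\det}_{H/H_0}(iy) = \log(1 - \tfrac{\alpha}{y} + o(1/y)) = -\tfrac{\alpha}{y} + o(1/y)$, and matching the two asymptotics gives $\tfrac{1}{\pi}\|\zeta\|_{L^1} = \alpha$, i.e. \eqref{norma}. The main obstacle I anticipate is making the asymptotic extraction of $m(iy)$ fully rigorous when $H_0$ is merely maximal accumulative rather than self-adjoint — one must ensure the $O(1/y^2)$ remainder is genuinely summable/controlled in the relevant (trace or operator) norm; I would handle this via the second resolvent identity $(H_0 - iy)^{-1} = \tfrac{i}{y}I + \tfrac{1}{(iy)^2}H_0(H_0 - iy)^{-1}$ valid on the domain of $H_0$, together with $\|(H_0 - iy)^{-1}\| \le 1/y$ for $y > 0$, which suffices to bound the error sandwiched against $\varphi$ provided $\varphi \in \dom(H_0)$; the general case follows by a density argument since $m$ and $\zeta$ depend continuously on $\varphi$ in the appropriate sense.
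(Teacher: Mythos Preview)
Your overall strategy coincides with the paper's: write the rank-one determinant as $1-i\alpha\, m(z)$ with $m(z)=\langle(H_0-z)^{-1}\varphi,\varphi\rangle$, identify $\alpha m$ as Herglotz, invoke Theorem~\ref{outer} for outerness, and then read off $\|\zeta\|_{L^1}$ from the $z=iy\to i\infty$ asymptotics. Two points need repair.

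First, the sign bookkeeping is off in several places. For $z\in\bbC_+$ one has $\Im m(z)\ge0$, not $\le0$: with $h=(H_0-z)^{-1}\varphi$ one gets $\overline{m(z)}=\langle(H_0-z)h,h\rangle$, whose imaginary part is $\le -\Im(z)\|h\|^2<0$. Thus $M=\alpha m$ (not $-\alpha m$) is Herglotz and ${\det}_{H/H_0}=1-iM$. Downstream, $-i\alpha\, m(iy)=+\alpha/y+o(1/y)$ and $\tfrac{1}{\pi i}\int\tfrac{\zeta(\lambda)}{\lambda-iy}\,d\lambda=+\tfrac{1}{\pi y}\|\zeta\|_{L^1}+o(1/y)$; your two sign slips cancel, so the final identity is right, but the displayed intermediate formulas are not.

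Second, and more substantively, you invoke the Cauchy-kernel representation $\exp\bigl(\tfrac{1}{\pi i}\int\tfrac{\zeta}{\lambda-z}\,d\lambda\bigr)$ to extract $\|\zeta\|_{L^1}$ \emph{before} establishing that $\zeta\in L^1(\bbR)$; outerness alone only yields $\zeta\in L^1(\bbR;\tfrac{d\lambda}{1+\lambda^2})$, and your phrase ``once we know $\zeta\in L^1$'' flags but does not close this gap. The paper handles both this and the asymptotic of $m(iy)$ in one stroke by passing to the minimal self-adjoint dilation $\calL$ of $H_0$: then $m(z)=\langle(\calL-z)^{-1}\tilde f,\tilde f\rangle$ is the Cauchy transform of a \emph{finite} spectral measure, which gives the Herglotz property, the asymptotic ${\det}_{H/H_0}(iy)=1+\alpha/y+o(1/y)$ for arbitrary $\varphi$ (no domain hypothesis or density argument needed), and hence boundedness of $y$ times the Herglotz-kernel integral; the latter forces $\zeta\in L^1$ by \cite[Chapter~VI, Section~59, Theorem~3]{Akhiezer}, after which the simple Cauchy-kernel form and the norm computation become legitimate. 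Your resolvent-identity route to $m(iy)=i/y+o(1/y)$ can be salvaged (uniform bound $\|y(H_0-iy)^{-1}\|\le1$ plus convergence on the dense set $\dom(H_0)$ suffices for all $\varphi$; the remark about $\zeta$ depending continuously on $\varphi$ is a red herring), but you still owe an independent argument---e.g.\ monotone convergence applied to the Poisson integral of the nonnegative $\zeta$---for $\zeta\in L^1$ before matching coefficients.
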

\begin{proof}
Suppose that for every $g\in\calH$,
$$
Pg=\left<g, f\right> f, \quad \|f\|=1.
$$
Then for all $z\in \rho(H_0)$,
\begin{align}
\label{rank1}
{\det}_{H/H_0}(z)=\det(I-i\alpha P(H_0-z)^{-1})=1-i\alpha \left<(H_0-z)^{-1}f, f\right>.
\end{align}
Since $\alpha >0$ and $H_0$ is an accumulative operator, the quadratic form
$\alpha \left<(H_0-z)^{-1}f, f\right>$ is a Herglotz function in the upper half plane.
Indeed, denote by $\calL$ the minimal  self-adjoint dilation of the accumulative operator $H_0$ in a Hilbert space $\calK$, $\calH\subset \calK$ (see \cite{SFBK10} for details), so that
$$(H_0-z)^{-1}=P_{\calH}(\calL-z)^{-1}|_{\calH}, \quad z\in \bbC_+.$$
Hence, $\big<(H_0-z)^{-1}f,f\big>=\big<(\calL-z)^{-1}\tilde f,\tilde f\big>$ is a Herglotz function.
Here $\tilde f=\calJ f$, with $\calJ:\calH \to \calK$ the natural imbedding of the Hilbert space $\calH$ into the Hilbert space $\calK$.

By  Theorem \ref{outer} and the representation \eqref{rank1}, the perturbation determinant
${\det}_{H/H_0}(z)$ is an outer function in the upper
half-plane. Therefore
$$
{\det}_{H/H_0}(z)=e^{i\gamma}\exp\bigg (\frac{1}{\pi i} \int
\zeta(\lambda)\bigg(\frac{1}{\lambda- z}-\frac{\lambda}{1+\lambda^2}\bigg)\,d\lambda\bigg), \quad z\in\bbC_+,
$$
where $\gamma\in\bbR$, the function $\zeta(\lambda)$ is given by \eqref{lapunov}, and
$$
\int \frac{|\zeta(\lambda)|}{1+\lambda^2}\,d\lambda<\infty.
$$
Since
$$
\Re ({\det}_{H/H_0}(z))=1-\Re\left(i\alpha \left<(H_0-z)^{-1}f, f\right>\right)\ge 1, \quad z\in\bbC_+,
$$
and hence,
$$|{\det}_{H/H_0}(z)|\ge 1, \quad z\in\bbC_+, $$
the function  $\zeta$  given by \eqref{lapunov} is non-negative
almost everywhere.
We also have the representation
$$
{\det}_{H/H_0}(z)=1-i\alpha \big<(\calL-z)^{-1}\tilde f, \tilde f\big>, \quad z\in \bbC_+.
$$
Since $\big<E_{\mathcal{L}}(\cdot)\tilde f, \tilde f\big>$ is a finite measure, where $E_{\mathcal{L}}$ is the spectral measure of $\mathcal{L}$, we have the asymptotics
\begin{equation}\label{detas}
{\det}_{H/H_0}(iy)=1+\frac{\alpha}{y} +o(y^{-1}) \;\text{ as }\; y\to
+\infty.
\end{equation}
Hence,
$$
\sup_{y>0}\left |\frac{y}{\pi i} \int \zeta(\lambda)\left(\frac{1}{\lambda-
iy}-\frac{\lambda}{1+\lambda^2}\right)\,d\lambda\right |<\infty,
$$
which proves (see, e.g., \cite[Chapter~VI, Section~59, Theorem~3]{Akhiezer}) that $\zeta$ is an integrable function and, therefore, the perturbation determinant admits the representation
$$
{\det}_{H/H_0}(z)=\exp\bigg (\frac{1}{\pi i} \int\frac{\zeta(\lambda)}{\lambda-z}\,d\lambda
\bigg), \quad z\in\bbC_+.
$$
One then observes that
\begin{equation}\label{detsa}
{\det}_{H/H_0}(iy)=\exp \bigg (\frac{1}{\pi y} \int_\bbR
\zeta(\lambda)\,d\lambda+o(y^{-1})\bigg ) \;\text{ as }\; y\to
+\infty.
\end{equation}
 Comparing \eqref{detas} and \eqref{detsa} yields
 $$
 \int_\bbR \zeta(\lambda)\,d\lambda=\pi\alpha,
$$
which proves \eqref{norma}, since $\zeta $ is non-negative a.e.
\end{proof}

\begin{theorem}\label{uhpthm} Let $H_0$ be a maximal accumulative operator, $0\leq V=V^*\in S^1$, and let $H=H_0-iV$. Then, the perturbation determinant ${\det}_{H/H_0}(z)$ is an outer function in $\bbC_+$. Moreover,
\begin{equation}\label{ExpRepZeta}
{\det}_{H/H_0}(z)=\exp\bigg (\frac{1}{\pi i} \int \frac{\zeta(\lambda)}{\lambda- z}\,d\lambda\bigg), \quad z\in\bbC_+,
\end{equation}
where
\begin{equation}\label{zeta}
\zeta(\lambda)=\lim_{\varepsilon\to 0^+} \log \big
|{\det}_{H/H_0}(\lambda+i\varepsilon)\big|\ge 0\quad\text{\rm a.e. } \lambda\in \bbR,
\end{equation}
with
$$
\|\zeta\|_{L^1(\bbR)}=\tr (V).
$$
\end{theorem}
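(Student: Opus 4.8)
The plan is to reduce the general trace-class case to the rank-one situation already handled in Lemma~\ref{l1} and then pass to the limit. Since $0\le V\in S^1$, write the spectral decomposition $V=\sum_{k\ge1}\alpha_k P_k$, where $\alpha_k>0$, the $P_k$ are mutually orthogonal one-dimensional projections, and $\sum_k\alpha_k=\tr(V)<\infty$. Set $V^{(n)}=\sum_{k=1}^n\alpha_k P_k$ and $H_n=H_0-iV^{(n)}$, so that $H_n\to H$ as $V^{(n)}\to V$ in $S^1$. Each $H_n$ is again maximal accumulative, being a bounded accumulative perturbation $-iV^{(n)}$ of the maximal accumulative operator $H_0$, hence $\bbC_+\subset\rho(H_n)$; moreover the passage from $H_{k-1}$ to $H_k=H_{k-1}-i\alpha_k P_k$ is precisely the rank-one perturbation covered by Lemma~\ref{l1}, applied with $H_{k-1}$ in the role of the unperturbed operator. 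Therefore, for each $k$,
\[
{\det}_{H_k/H_{k-1}}(z)=\exp\!\Big(\tfrac{1}{\pi i}\int\tfrac{\zeta_k(\lambda)}{\lambda-z}\,d\lambda\Big),\quad z\in\bbC_+,
\]
with $\zeta_k\ge0$ a.e.\ on $\bbR$ and $\|\zeta_k\|_{L^1(\bbR)}=\pi\alpha_k$. (Alternatively one could work directly with the trace-class-valued Herglotz function $V^{1/2}(H_0-z)^{-1}V^{1/2}$, but the reduction to rank one via Lemma~\ref{l1} is more economical.)

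Next I would invoke the multiplicativity (chain rule) of perturbation determinants: for $z\in\bbC_+$,
\[
{\det}_{H_n/H_0}(z)=\prod_{k=1}^n{\det}_{H_k/H_{k-1}}(z)=\exp\!\Big(\tfrac{1}{\pi i}\int\tfrac{\zeta^{(n)}(\lambda)}{\lambda-z}\,d\lambda\Big),\qquad \zeta^{(n)}:=\sum_{k=1}^n\zeta_k.
\]
The sequence $(\zeta^{(n)})_n$ is nonnegative and nondecreasing with $\|\zeta^{(n)}\|_{L^1(\bbR)}=\pi\sum_{k\le n}\alpha_k\le\pi\,\tr(V)$, so by monotone convergence it converges a.e.\ and in $L^1(\bbR)$ to a nonnegative $\zeta\in L^1(\bbR)$ with $\|\zeta\|_{L^1(\bbR)}=\pi\,\tr(V)$. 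On the other hand, since $V^{(n)}\to V$ in $S^1$ and $z\mapsto(H_0-z)^{-1}$ is locally bounded in operator norm on $\bbC_+$, we have $V^{(n)}(H_0-z)^{-1}\to V(H_0-z)^{-1}$ in $S^1$ locally uniformly on $\bbC_+$; continuity of $F\mapsto\det(I+F)$ on $S^1$ then gives ${\det}_{H_n/H_0}(z)=\det(I-iV^{(n)}(H_0-z)^{-1})\to{\det}_{H/H_0}(z)$ locally uniformly on $\bbC_+$. Finally, for fixed $z\in\bbC_+$ the kernel $\lambda\mapsto(\lambda-z)^{-1}$ is bounded on $\bbR$, so the $L^1$-convergence of $\zeta^{(n)}$ yields $\int\frac{\zeta^{(n)}(\lambda)}{\lambda-z}\,d\lambda\to\int\frac{\zeta(\lambda)}{\lambda-z}\,d\lambda$; equating the two limits proves \eqref{ExpRepZeta}.

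It then remains to recognize ${\det}_{H/H_0}$ as an outer function and to identify $\zeta$ with \eqref{zeta}. Splitting $(\lambda-z)^{-1}=\big[(\lambda-z)^{-1}-\lambda(1+\lambda^2)^{-1}\big]+\lambda(1+\lambda^2)^{-1}$ in \eqref{ExpRepZeta} and noting that $\tfrac{1}{\pi i}\int\tfrac{\lambda\zeta(\lambda)}{1+\lambda^2}\,d\lambda$ is a purely imaginary constant (as $\zeta\in L^1(\bbR)$ is real), one sees that ${\det}_{H/H_0}(z)=e^{i\theta}O_F(z)$ for some $\theta\in\bbR$, with $\log|F|=\zeta\in L^1(\bbR)\subset L^1(\bbR;\tfrac{d\lambda}{1+\lambda^2})$; thus ${\det}_{H/H_0}$ is outer in $\bbC_+$, its radial boundary modulus equals $e^{\zeta(\lambda)}$ a.e., so the $\zeta$ of \eqref{zeta} coincides with the one constructed here, and $\zeta\ge0$ a.e.\ (equivalently $|{\det}_{H/H_0}(z)|\ge1$ on $\bbC_+$, being a product of factors of modulus $\ge1$). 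The $L^1$-norm identity is the one already obtained; it can also be read off, exactly as in Lemma~\ref{l1}, by comparing ${\det}_{H/H_0}(iy)=1+\tr(V)/y+o(y^{-1})$ (with $\tr(V(H_0-iy)^{-1})$ evaluated through the minimal self-adjoint dilation of $H_0$) against ${\det}_{H/H_0}(iy)=\exp\!\big(\tfrac{1}{\pi y}\int_\bbR\zeta\,d\lambda+o(y^{-1})\big)$. I expect the only delicate point to be the interchange of limits — one must be sure no inner factor survives as $n\to\infty$ — but this is automatic here, since every finite product is already outer and the uniform bound $\|\zeta^{(n)}\|_{L^1}\le\pi\,\tr(V)$ keeps the limiting representation a genuine outer exponential; the essential hypothesis is $V\in S^1$ (not merely $V$ compact), which supplies $\sum_k\alpha_k<\infty$ and hence the integrability of $\zeta$.
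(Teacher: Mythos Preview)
Your argument is essentially identical to the paper's own proof: spectral decomposition of $V$, telescoping via the chain rule for perturbation determinants, application of Lemma~\ref{l1} to each rank-one step, monotone $L^1$-convergence of the partial sums $\zeta^{(n)}$, and locally uniform convergence of the determinants to pass to the limit. Your treatment is in fact slightly more explicit than the paper's in two places (you spell out why the limit is outer by splitting the Cauchy kernel, and you justify the convergence of determinants directly via $S^1$-continuity rather than citing \cite[Theorem~3.4]{Simon}). One point worth flagging: following Lemma~\ref{l1} you correctly obtain $\|\zeta\|_{L^1}=\pi\,\tr(V)$, whereas the theorem as stated claims $\|\zeta\|_{L^1}=\tr(V)$; this is an internal inconsistency in the paper (the paper's own proof writes $\int\zeta_n=\sum_{k\le n}\alpha_k$, dropping the $\pi$ from \eqref{norma}), and your version is the one consistent with Lemma~\ref{l1} and with the explicit computation in Example~\ref{notL1}.
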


\begin{proof}
Let $V=\sum_{k=1}^\infty \alpha_k P_k$
be the spectral decomposition of the trace class operator $V$,
where $P_k$, $  k=1,2, \dots$, are one-dimensional spectral projections and
$\alpha_1\ge \alpha_2\ge \dots$,
are the corresponding eigenvalues  counting multiplicity.
Introducing the accumulative operators
$$
H_{k+1}=H_{k}-i\alpha_k P_k, \quad k\in \bbN,
$$
and taking into account the multiplicativity of the perturbation
determinant \cite[Theorem~3.5]{Simon}, one obtains
\begin{equation}\label{prod}
{\det}_{H_n/H_0}(z)=\prod_{k=1}^n{\det}_{H_{k}/H_{k-1}}(z), \quad z\in\bbC_+.
\end{equation}
By Lemma \ref{l1}, every factor in  the product \eqref{prod} is an outer function in $\bbC_+$, so is ${\det}_{H_n/H_0}(z)$, and, moreover, one has the representations
$$
{\det}_{H_n/H_0}(z)=\exp\bigg (\frac{1}{\pi i} \int \frac{\zeta_n(\lambda)}{\lambda- z}\,d\lambda\bigg), \quad z\in\bbC_+, \quad n\in \bbN,
$$
where $\{\zeta_k\}_{k\in \bbN}$ is a monotone sequence of nonnegative summable functions.
It also follows from Lemma  \ref{l1} that
$$
\int_{\bbR}\zeta_n(\lambda)\,d\lambda=\sum_{k=1}^n \alpha_k,\quad n\in \bbN.
$$
Since by hypothesis $V$ is a trace class operator, the series $\sum_{k=1}^\infty \alpha_k$ converges, and therefore, the sequence $\zeta_n$ converges pointwise a.e. and in the topology of the space $L^1(\bbR)$
to a summable function $\zeta$.
By \cite[Theorem~3.4]{Simon},
 $$
 \lim_{n\to \infty}{\det}_{H_n/H_0}(z)={\det}_{H/H_0}(z)
 $$
uniformly on compact subsets of $\bbC_+$ and
 $$
 \lim_{n\to \infty}\exp\bigg (\frac{1}{\pi i} \int \frac{\zeta_n(\lambda)}{\lambda- z}\,d\lambda\bigg)=\exp\bigg (\frac{1}{\pi i} \int \frac{\zeta(\lambda)}{\lambda- z}\,d\lambda\bigg).
 $$
Thus, one obtains  the representation
 $$
{\det}_{H/H_0}(z)= \exp\bigg (\frac{1}{\pi i} \int \frac{\zeta(\lambda)}{\lambda- z}\,d\lambda\bigg), \quad z\in\bbC_+.
 $$
In particular, the perturbation determinant ${\det}_{H/H_0}(z)$ is an outer function (in $\bbC_+$) and \eqref{zeta} holds.
\end{proof}

\begin{remark}
An analog of the representation \eqref{ExpRepZeta} with a measure not known to be absolutely continuous has appeared previously in \cite[Theorem~9.1]{Kr89}. Recently, \eqref{ExpRepZeta} was extended in \cite[Theorem~6.6]{NM} to pairs of maximally accumulative operators $H_0$ and $H$ with trace class differences by treating separately purely imaginary and purely real perturbations and using multiplicativity of the perturbation determinant. Further generalizations of \eqref{ExpRepZeta} can be found in \cite[Theorem~5.7]{NM}.

To obtain an exponential representation for ${\det}_{H/H_0}$ in $\bbC_-$, we remark that the Schwarz reflection principle, which was
valid in the self-adjoint setting, does not hold anymore, and it should be modified by the relation
\begin{equation}\label{wh}
{\det}_{H/H_0}(\lambda-i0)={\det}_{H/H^*}(\lambda-i0)\,\overline{{\det }_{H/H_0}(\lambda+i0)}, \quad \text{ a.e. } \lambda\in \bbR.
\end{equation}
\end{remark}

\begin{theorem}\label{lhpl}
Suppose that $H_0=H_0^*$, $0\leq V=V^*\in S^1$, and let $H=H_0-iV$.
Then, the perturbation determinant ${\det}_{H/H_0}(z)$, $z\in\bbC_-$, admits the representation
\begin{align} \label{dddd}
{\det}_{H/H_0}(z)=e^{i\gamma-iaz}B(z)&\exp
\bigg(\frac{1}{\pi i}\int_\bbR
\frac{1+\lambda z}{\lambda-z}\,d\mu(\lambda)\bigg) \no\\
&\times
\exp\bigg (-\frac{1}{\pi i} \int \frac{\zeta(\lambda)}{\lambda- z}\,d\lambda\bigg),
\end{align}
where $\gamma\in \bbR$, $a\ge 0$, $B(z)$ is the Blaschke product associated with the eigenvalues
of $H$ in $\bbC_-$, $0\le\mu$ is a finite Borel measure on $\bbR$, and $\zeta$ is the
summable function given by \eqref{zeta}.
\end{theorem}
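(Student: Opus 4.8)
The plan is to split ${\det}_{H/H_0}$ in $\bbC_-$ as a product of ${\det}_{H/H^*}$ and ${\det}_{H^*/H_0}$, to identify the second factor by a Schwarz-type reflection combined with Theorem~\ref{uhpthm}, and to obtain the first factor from the canonical factorization of Remark~\ref{remcon} after establishing that it is a contractive analytic function in $\bbC_-$. Since $H$ is maximal accumulative, $H^*=H_0+iV$ is maximal dissipative, so $\bbC_-\subset\rho(H^*)$ and ${\det}_{H/H^*}(z)=\det\big((H-z)(H^*-z)^{-1}\big)=\det\big(I-2iV(H^*-z)^{-1}\big)$ is analytic in $\bbC_-$. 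Factoring $(H-z)(H_0-z)^{-1}=\big[(H-z)(H^*-z)^{-1}\big]\big[(H^*-z)(H_0-z)^{-1}\big]$ and invoking multiplicativity of the perturbation determinant \cite[Theorem~3.5]{Simon} yields
$$
{\det}_{H/H_0}(z)={\det}_{H/H^*}(z)\,{\det}_{H^*/H_0}(z),\qquad z\in\bbC_-,
$$
whose boundary form on $\bbR$ is exactly \eqref{wh}. For $z\in\bbC_-$ (so $\bar z\in\bbC_+$), using $H_0^*=H_0$, $\overline{\det(I+T)}=\det(I+T^*)$, and $\det(I+AB)=\det(I+BA)$ one gets $\overline{{\det}_{H/H_0}(\bar z)}=\overline{\det\big(I-iV(H_0-\bar z)^{-1}\big)}=\det\big(I+iV(H_0-z)^{-1}\big)={\det}_{H^*/H_0}(z)$; conjugating the representation \eqref{ExpRepZeta} of Theorem~\ref{uhpthm} and using $\overline{\lambda-\bar z}=\lambda-z$ then gives
$$
{\det}_{H^*/H_0}(z)=\exp\bigg(-\frac{1}{\pi i}\int\frac{\zeta(\lambda)}{\lambda-z}\,d\lambda\bigg),\qquad z\in\bbC_-,
$$
with $\zeta$ as in \eqref{zeta}; this accounts for the last factor of \eqref{dddd}.

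The crux is to show that $G(z):={\det}_{H/H^*}(z)$ is a contraction on $\bbC_-$. Factoring $H-z$ and $H^*-z$ through $H_0-z$ and using $\det(I+AB)=\det(I+BA)$ with $A=V^{1/2}$,
$$
G(z)=\frac{\det\big(I-iV(H_0-z)^{-1}\big)}{\det\big(I+iV(H_0-z)^{-1}\big)}=\frac{\det\big(I-iM_0(z)\big)}{\det\big(I+iM_0(z)\big)},\qquad M_0(z):=V^{1/2}(H_0-z)^{-1}V^{1/2},
$$
where $M_0$ is an operator-valued Herglotz function lying in the trace class and $M_0(z)^*=M_0(\bar z)$, so that $\Im M_0(z)=-\Im M_0(\bar z)\le0$ for $z\in\bbC_-$. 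Hence
$$
|G(z)|^2=\frac{\det\big((I-iM_0(z))(I-iM_0(z))^*\big)}{\det\big((I+iM_0(z))(I+iM_0(z))^*\big)}=\frac{\det\big(I+2\Im M_0(z)+M_0(z)M_0(z)^*\big)}{\det\big(I-2\Im M_0(z)+M_0(z)M_0(z)^*\big)}.
$$
The two operators under the determinants are nonnegative, their difference equals $4\Im M_0(z)\le0$, and the one in the denominator is $\ge I$; by monotonicity of the Fredholm determinant (if $0\le A\le B$ with $A-I,B-I\in S^1$ and $B$ boundedly invertible, then $0\le\det A\le\det B$, as follows from $A=B^{1/2}\big(I-B^{-1/2}(B-A)B^{-1/2}\big)B^{1/2}$ together with $0\le B^{-1/2}(B-A)B^{-1/2}\le I$) we conclude $|G(z)|\le1$ for $z\in\bbC_-$. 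Also $G\not\equiv0$, since $M_0(-iy)\to0$, hence $G(-iy)\to1$, as $y\to+\infty$; and by the Fredholm alternative for $I-2iV(H^*-z)^{-1}$ the zeros of $G$ in $\bbC_-$ are precisely the eigenvalues of $H$ in $\bbC_-$, counted with algebraic multiplicity.

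Thus $G\in H^\infty(\bbC_-)$ with $\|G\|_\infty\le1$ and $G\not\equiv0$, and the concluding step is to apply Remark~\ref{remcon}(ii) to the reflected function $\widetilde G(w):=\overline{G(\bar w)}\in H^\infty(\bbC_+)$, $\|\widetilde G\|_\infty\le1$: it factors as $\widetilde G(w)=\widetilde B(w)\exp\big(\frac{i}{\pi}M(w)\big)$, where $\widetilde B$ is a Blaschke product whose zeros are the complex conjugates of the eigenvalues of $H$ in $\bbC_-$ and $M$ is Herglotz, $M(w)=\pi aw+\pi\gamma_0+\int_\bbR\big(\frac{1}{\lambda-w}-\frac{\lambda}{1+\lambda^2}\big)d\mu_0(\lambda)$ with $a\ge0$, $\gamma_0\in\bbR$, $\mu_0\ge0$, $\int(1+\lambda^2)^{-1}\,d\mu_0<\infty$. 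Reflecting back via $G(z)=\overline{\widetilde G(\bar z)}$, and using $\overline{e^{ia\bar z}}=e^{-iaz}$, $\overline{\lambda-\bar z}=\lambda-z$, and $\frac{1}{\lambda-z}-\frac{\lambda}{1+\lambda^2}=\frac{1+\lambda z}{(\lambda-z)(1+\lambda^2)}$, one arrives at
$$
G(z)=e^{i\gamma-iaz}B(z)\exp\bigg(\frac{1}{\pi i}\int_\bbR\frac{1+\lambda z}{\lambda-z}\,d\mu(\lambda)\bigg),\qquad z\in\bbC_-,
$$
with $\gamma:=-\gamma_0$, $d\mu(\lambda):=(1+\lambda^2)^{-1}d\mu_0(\lambda)$ a finite Borel measure on $\bbR$ (indeed $\mu(\bbR)=\int(1+\lambda^2)^{-1}d\mu_0<\infty$), and $B(z):=\overline{\widetilde B(\bar z)}$ the Blaschke product associated with the eigenvalues of $H$ in $\bbC_-$. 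Multiplying this expression for ${\det}_{H/H^*}(z)$ by the one found above for ${\det}_{H^*/H_0}(z)$ gives \eqref{dddd}. The one genuinely substantial step is the contractivity bound $|G|\le1$ on $\bbC_-$; the reflection manipulations and the appeal to the canonical factorization are routine once that is in hand.
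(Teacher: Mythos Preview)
Your proof is correct and follows essentially the same route as the paper: factor ${\det}_{H/H_0}(z)={\det}_{H/H^*}(z)\cdot{\det}_{H^*/H_0}(z)$ in $\bbC_-$, handle the second factor by conjugating \eqref{ExpRepZeta}, and apply the canonical factorization of Remark~\ref{remcon}(ii) to the contractive function ${\det}_{H/H^*}$ in $\bbC_-$. The only difference is that the paper cites \cite[eq.~(8.16)]{Kr89} for the contractivity $|{\det}_{H/H^*}(z)|\le1$ on $\bbC_-$, whereas you supply a direct self-contained argument via the symmetrized form $M_0(z)=V^{1/2}(H_0-z)^{-1}V^{1/2}$ and monotonicity of the Fredholm determinant; this is a welcome addition, and your explicit handling of the reflection and of the passage from $\mu_0$ to the finite measure $d\mu=(1+\lambda^2)^{-1}d\mu_0$ matches the statement precisely.
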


\begin{proof}
As a consequence of the multiplication rule, we have the decomposition
\begin{align}\label{miltr}
\nonumber
{\det}_{H/H_0}(z)&={\det}_{H/H^*}(z)\cdot{\det}_{H^*/H_0}(z)\\
&={\det}_{H/H^*}(z)\cdot\overline{{\det}_{H/H_0}(\overline{z})},\quad z\in \rho(H^*)\cap\rho(H_0).
\end{align}
By Theorem \ref{uhpthm} (in accordance with \eqref{zeta}), we have
\begin{align}
\label{miltr1}
\overline{{\det}_{H/H_0}(\overline{z})}
=\exp\left (\overline{\frac{1}{\pi i} \int \frac{
\zeta(\lambda)}{\lambda- \overline{z}}\,d\lambda}\right)
=\exp\left (-\frac{1}{\pi i} \int \frac{
\zeta(\lambda)}{\lambda- z}\,d\lambda\right), \quad z\in\bbC_-.
\end{align}
It was established in \cite[eq.~(8.16)]{Kr89} that in the lower half-plane $\bbC_-$, the perturbation determinant ${\det}_{H/H^*}(z)$, $z\in\bbC_-,$ is an analytic contractive function. Thus, by the standard inner-outer factorization (see Theorem \ref{factth} and Remark \ref{remcon}),
\begin{equation}
\label{HH*}
{\det}_{H/H^*}(z)=e^{i\gamma-iaz}B(z)\exp\bigg(\frac{1}{\pi i}\int_\bbR \frac{1+\lambda z}{\lambda-z}
\,d\mu(\lambda)\bigg),  \quad z\in\bbC_-.
\end{equation}
Combining \eqref{miltr}--\eqref{HH*} completes the proof.
\end{proof}

\section{The argument of the perturbation determinant}

Let $H_0=H_0^*$ and let $0\leq V=V^*\in S^1$. Let $\{V_n\}_{n=1}^\infty$ be finite-rank approximations to $V$ with $V_n\geq 0$, $\rank(V_n)\leq n$, and $V_n\to V$ in the trace class norm. Denote $H=H_0-iV$ and $H_n=H_0-iV_n$.
Introducing the spectral shift functions $\xi_n(\lambda)$
associated with the pairs $H_n$ and $H_0$ by the standard relation
$$
\xi_n(\lambda)=\frac1\pi\arg ({\det}_{H_n/H_0}(\lambda+i0)), \quad \text{
a.e. } \lambda\in \bbR,
$$
by \eqref{rank1} and \eqref{prod}, one obtains the bounds
\begin{equation}\label{maxim}
-\frac n2\le \xi_n(\lambda)\le  \frac n2, \quad \text{
a.e. } \lambda\in \bbR, \quad n\in \bbN.
\end{equation}
Thus, in addition to \eqref{ExpRepZeta}, one also has the following exponential representation for all $z\in\bbC_+$,
\begin{align} \lb{DetLimXi}
{\det}_{H/H_0}(z)=|{\det}_{H/H_0}(i)|\exp\left  (\lim_{n\to \infty} \int_\bbR\xi_n(\lambda)\left  (\frac{1}{\lambda-
z}-\frac{\lambda}{1+\lambda^2}\right)\,d\lambda\right).
\end{align}
However, in general, one cannot bring the limit under the integral due to the fact that the limit of $\xi_n(\lambda)$,
\begin{align}\label{xi1}
\xi(\lambda)=\frac1\pi\arg ({\det}_{H/H_0}(\lambda+i0)), \quad \text{a.e. } \lambda\in \bbR,
\end{align}
can be non-locally integrable, and hence, not in $L^1(\bbR,\f{d\la}{1+\la^2})$ as discussed in Example \ref{notL1loc} (see also \cite[Ex.~3.10]{N87}). We remark that the membership  $\xi\in L^1(\bbR,\f{d\la}{1+\la^2})$ can be recovered if the perturbation is slightly stronger than the trace class (see,  e.g., \cite{N87}).
Nonetheless, the spectral shift function $\xi(\lambda)$ given by \eqref{xi1}
is an element of the larger space of A-integrable functions $(A)L^1(\bbR,\f{d\la}{1+\la^2})$ defined below.

\begin{definition}Let $(X,\mathcal{A},\mu)$ be a $\sigma$-finite measure space,  the space $L^1_{w,0}(X,\mu)$ consists of all measurable functions $f(x)$ that satisfy
\begin{align}\lb{L1w0}
\mu\{x:\,|f(x)|>t\}=o\left(\frac1t\right) \;\text{ as }\; t\rightarrow \infty \;\text{ and }\; t\rightarrow 0^+.
\end{align}
For finite measures $\mu$, the above condition as $t\to0^+$ is automatically satisfied. A function $f$ is said to be A-integrable (see, e.g., \cite{Al81}), if $f\in L^1_{w,0}(X,\mu)$ and the limit
\[(A)\int_X f(x)\,d\mu(x):=\lim_{\substack{b\rightarrow 0\\ B\rightarrow \infty}}\int_{\{y\in X:\,b\leq |f(y)|\leq B\}} f(x)\,d\mu(x)\] exists.
By definition, the space   $(A)L^1(X,\mu)$ consists of all $A$-integrable functions.
\end{definition}

By changing the type of integration in \eqref{DetLimXi} we obtain the following exponential representation for the perturbation determinant in terms of the spectral shift function $\xi(\la)$.

\begin{theorem} \label{logdet}
Let $H_0$ be a maximal accumulative operator, $0\leq V=V^*\in S^1$, and define $H=H_0-i V$. Then
\begin{equation}\label{Aint}
{\det}_{H/H_0}(z)
=|{\det}_{H/H_0}(i)|\exp\left ((A)\int_\bbR \frac{1+z\lambda}{\lambda-z}\xi(\lambda)\f{d\lambda}{1+\lambda^2}\right), \quad z\in\bbC_+.
\end{equation}
\end{theorem}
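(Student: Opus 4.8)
The plan is to start from the exponential representation \eqref{ExpRepZeta} established in Theorem \ref{uhpthm}, which after evaluating at $z=i$ and dividing gives
\[
{\det}_{H/H_0}(z)=|{\det}_{H/H_0}(i)|\exp\bigg(\frac{1}{\pi i}\int_\bbR\zeta(\lambda)\bigg(\frac{1}{\lambda-z}-\frac{\lambda}{1+\lambda^2}\bigg)\,d\lambda\bigg),\quad z\in\bbC_+,
\]
once one absorbs the constant $\exp\big(\frac{1}{\pi i}\int\zeta(\lambda)(\frac{i-\lambda}{\lambda^2+1}+\frac{\lambda}{1+\lambda^2})\,d\lambda\big)$ into the real factor $|{\det}_{H/H_0}(i)|$ using that $\zeta\in L^1(\bbR)$ is real; indeed one checks $|{\det}_{H/H_0}(i)|=\exp\big(\frac1\pi\int\zeta(\lambda)\frac{d\lambda}{1+\lambda^2}\big)$. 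Then $\frac{1}{\pi i}=\frac1\pi\cdot(-i)$, and the point is to rewrite $-i\big(\frac{1}{\lambda-z}-\frac{\lambda}{1+\lambda^2}\big)$ in a form compatible with the kernel $\frac{1+z\lambda}{\lambda-z}\cdot\frac{1}{1+\lambda^2}$. A short computation gives the algebraic identity
\[
\frac{1}{\lambda-z}-\frac{\lambda}{1+\lambda^2}=\frac{1+z\lambda}{(\lambda-z)(1+\lambda^2)},
\]
so that the representation reads ${\det}_{H/H_0}(z)=|{\det}_{H/H_0}(i)|\exp\big(-\frac{i}{\pi}\int_\bbR\frac{1+z\lambda}{\lambda-z}\zeta(\lambda)\frac{d\lambda}{1+\lambda^2}\big)$. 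Thus the theorem reduces to the single identity
\[
(A)\int_\bbR \frac{1+z\lambda}{\lambda-z}\,\xi(\lambda)\,\frac{d\lambda}{1+\lambda^2}=-\frac{i}{\pi}\int_\bbR\frac{1+z\lambda}{\lambda-z}\,\zeta(\lambda)\,\frac{d\lambda}{1+\lambda^2},\quad z\in\bbC_+.
\]

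Next I would set up the comparison with the finite-rank approximations $H_n=H_0-iV_n$. For each $n$ the operator $H_n$ is a finite-rank accumulative perturbation, so (combining \eqref{rank1}, \eqref{prod}, and Theorem \ref{uhpthm} applied to $H_n$) ${\det}_{H_n/H_0}$ is outer in $\bbC_+$ with $\log|{\det}_{H_n/H_0}(\lambda+i0)|=\zeta_n(\lambda)\in L^1(\bbR)$, and moreover $\xi_n\in L^\infty(\bbR)$ is compactly essentially bounded by $n/2$ per \eqref{maxim}; in particular $\xi_n\in L^1(\bbR,\frac{d\lambda}{1+\lambda^2})$. For finite-rank $V_n$ the function ${\det}_{H_n/H_0}(z)-1$ decays like $O(|z|^{-1})$ with the Cauchy kernel, so $\xi_n\in L^1(\bbR)$ as well, and the outer function $O(z):={\det}_{H_n/H_0}(z)$ satisfies the classical identity relating the real and imaginary parts of $\log O$ on the boundary: since $\log O$ is in the relevant Nevanlinna class and $\log O(z)=\frac1\pi\int_\bbR\frac{1+z\lambda}{\lambda-z}(\zeta_n(\lambda)+i\pi\xi_n(\lambda))\frac{d\lambda}{1+\lambda^2}+ic_n$ forces, by equating with \eqref{ExpRepZeta} for $H_n$, the relation
\[
\frac1\pi\int_\bbR\frac{1+z\lambda}{\lambda-z}\,\pi\xi_n(\lambda)\,\frac{d\lambda}{1+\lambda^2}=-\frac{1}{\pi}\int_\bbR\frac{1+z\lambda}{\lambda-z}\,\zeta_n(\lambda)\,\frac{d\lambda}{1+\lambda^2}\cdot i\;+\;(\text{real const}),
\]
i.e. the conjugate-function (Hilbert transform) relation $\xi_n=-\frac1\pi\, \widetilde{\zeta_n/\pi}$ up to an additive constant, together with $\int_\bbR\xi_n(\lambda)\frac{d\lambda}{1+\lambda^2}$ being fixed by the value at $z=i$. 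Concretely, on the boundary, $\pi\xi_n(\lambda)=\arg{\det}_{H_n/H_0}(\lambda+i0)$ and $\zeta_n(\lambda)=\log|{\det}_{H_n/H_0}(\lambda+i0)|$ are boundary values of an outer function, hence are Hilbert-transform conjugates in the $\frac{d\lambda}{1+\lambda^2}$-normalized sense. From this, for $z\in\bbC_+$ one gets the exact finite-$n$ identity
\[
\int_\bbR\frac{1+z\lambda}{\lambda-z}\,\xi_n(\lambda)\,\frac{d\lambda}{1+\lambda^2}=-\frac{i}{\pi}\int_\bbR\frac{1+z\lambda}{\lambda-z}\,\zeta_n(\lambda)\,\frac{d\lambda}{1+\lambda^2}.
\]

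The final step is the passage to the limit $n\to\infty$. On the right side this is easy: $\zeta_n\to\zeta$ in $L^1(\bbR)$ by the argument in the proof of Theorem \ref{uhpthm} (the $\zeta_n$ increase to $\zeta$ with $\|\zeta_n\|_{L^1}=\tr V_n\to\tr V$), and $\frac{1+z\lambda}{\lambda-z}\frac{1}{1+\lambda^2}$ is bounded on $\bbR$ for fixed $z\in\bbC_+$, so the right-hand integrals converge to $-\frac{i}{\pi}\int\frac{1+z\lambda}{\lambda-z}\zeta\frac{d\lambda}{1+\lambda^2}$. The left side is the delicate part: $\xi_n\to\xi$ only in measure / a.e.\ (the convergence ${\det}_{H_n/H_0}\to{\det}_{H/H_0}$ on $\bbC_+$ via \cite[Theorem~3.4]{Simon} gives boundary convergence of arguments a.e.\ after passing to a subsequence), and $\xi\notin L^1(\bbR,\frac{d\lambda}{1+\lambda^2})$ in general. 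This is precisely where the $A$-integral is needed: I would invoke a convergence theorem for $A$-integrals — of the type that if $g_n\to g$ a.e., the $g_n$ are uniformly $A$-integrable against the finite measure $\frac{d\lambda}{1+\lambda^2}$, and $\sup_n\mu\{|g_n|>t\}=o(1/t)$ uniformly as $t\to\infty$, then $g\in(A)L^1$ and $(A)\int g_n\to (A)\int g$ (see, e.g., \cite{Al81}). The uniform weak-$L^1$ bound is exactly what one extracts from the finite-$n$ identity combined with the $L^1$-boundedness of $\zeta_n$: the Kolmogorov weak-type $(1,1)$ inequality for the conjugate function gives $\mu\{|\xi_n|>t\}\leq \frac{C}{t}\|\zeta_n\|_{L^1}\leq \frac{C\tr V}{t}$ uniformly in $n$, and a slightly more careful version (using that $\|\zeta_n-\zeta_m\|_{L^1}\to0$) upgrades this to the $o(1/t)$ needed for membership in $L^1_{w,0}$ and for the dominated-convergence statement for $A$-integrals. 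Since for fixed $z\in\bbC_+$ the multiplier $m_z(\lambda)=\frac{1+z\lambda}{\lambda-z}$ is bounded, the $A$-integrability and the $A$-integral convergence transfer from $\xi_n\to\xi$ to $m_z\xi_n\to m_z\xi$; passing to the limit in the finite-$n$ identity yields the desired formula \eqref{Aint}. The main obstacle is thus establishing the uniform-in-$n$ weak-$L^1$ estimate on $\xi_n$ with the correct $o(1/t)$ decay and justifying the interchange of the $A$-integral with the limit — everything else is an algebraic rearrangement of Theorem \ref{uhpthm} and a standard outer-function (conjugate-function) identity at finite rank.
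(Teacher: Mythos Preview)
Your approach is genuinely different from the paper's, and the crucial step---passing to the limit in the $A$-integral---is not justified and is in fact the heart of the matter. The paper does \emph{not} approximate by finite-rank perturbations at all. Instead it proves a general Herglotz-type $A$-integral formula (Lemma~\ref{AintLem}): if $f$ is analytic on $\bbC_+$ with boundary values in $L^1_{w,0}(\bbR;\frac{d\la}{1+\la^2})$, then the Schwarz-type representation \eqref{Ath2} holds with an $A$-integral of $\Im f$. This is obtained by conformally mapping to the disk and invoking Aleksandrov's theorem that any analytic function on $\bbD$ with boundary values in $L^1_{w,0}(\partial\bbD)$ satisfies an $A$-integral Cauchy formula. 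One then applies this directly to $f(z)=\log{\det}_{H/H_0}(z)$: since $\zeta\in L^1(\bbR)$ and $\pi\xi$ is its Hilbert transform, the cited result \cite{PSZ10} gives $\xi\in L^1_{w,0}$ as $t\to\infty$, and finiteness of $\frac{d\la}{1+\la^2}$ handles $t\to0^+$. No limit interchange is needed.

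Your route has two concrete problems beyond the limit step. First, the algebraic reduction is off: splitting $\frac{1}{\la-z}=\frac{1+z\la}{(\la-z)(1+\la^2)}+\frac{\la}{1+\la^2}$ in \eqref{ExpRepZeta} leaves the prefactor $\exp\big(\frac{1}{\pi i}\int\frac{\la\zeta(\la)}{1+\la^2}\,d\la\big)$, which is unimodular (it equals $e^{i\Im f(i)}$), not $|{\det}_{H/H_0}(i)|=e^{\Re f(i)}$. So the identity you reduce to is wrong by a nonzero constant, and correspondingly the finite-$n$ identity you write is missing the term $i\Im f_n(i)-\Re f_n(i)$. Second, the claim $\xi_n\in L^1(\bbR)$ is false already for rank one: Example~\ref{notL1} gives $\xi(\la)=\frac1\pi\arctan(\al/\la)\sim\al/(\pi\la)$, which is not integrable. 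More importantly, the ``convergence theorem for $A$-integrals'' you invoke does not exist in a ready-made form: the Kolmogorov weak-$(1,1)$ bound gives only $\mu\{|\xi_n|>t\}\le C\|\zeta_n\|_1/t$, which is $O(1/t)$, not the uniform $o(1/t)$ you need; upgrading this and then proving the limit of $A$-integrals equals the $A$-integral of the limit is essentially as hard as the theorem itself. Aleksandrov's result (used in Lemma~\ref{AintLem}) packages exactly this difficulty---existence of the $A$-integral and the representation formula simultaneously---by exploiting analyticity rather than approximation.
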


To avoid  confusion, we stress that the A-integral on the right hand-side of \eqref{Aint} should be understood as the A-integral of the function  $\frac{1+z\lambda}{\lambda-z}\xi(\lambda)$, considered as an element of the weighted space $(A)L^1(\bbR,\f{d\la}{1+\la^2})$.

The proof of the theorem is based on the following version of Herglotz-type A-integral formula for functions analytic in $\bbC_+$.

\begin{lemma} \label{AintLem}
If $f$ is analytic in $\bbC_+$ with boundary
values in $L^1_{w,0}(\bbR;\frac{d\lambda}{1+\lambda^2})$, then
$\Re f$ and $\Im f$ are $A$-integrable on $\bbR$ with respect to $\f{d\la}{1+\lambda^2}$ and
\begin{align}
f(z) &= i\Im f(i) + \frac{1}{\pi i} \, (A)\!\!\int_\bbR
\frac{1+\lambda z}{\lambda-z}\Re f(\lambda)\,\frac{d\lambda}{1+\lambda^2} \lb{Ath1}
\\
&= \Re f(i) + \frac{1}{\pi} \, (A)\!\!\int_\bbR
\frac{1+\lambda z}{\lambda-z}\Im f(\lambda)\,\frac{d\lambda}{1+\lambda^2}, \quad z\in\bbC_+. \lb{Ath2}
\end{align}
\end{lemma}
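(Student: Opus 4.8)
The plan is to reduce the statement to the classical Herglotz representation for functions of bounded type by a truncation-and-regularization argument, since the hypothesis $\Re f, \Im f \in L^1_{w,0}(\bbR;\frac{d\la}{1+\la^2})$ is precisely what is needed to convert the usual Poisson-type integral into an A-integral. First I would observe that it suffices to prove \eqref{Ath1}, since \eqref{Ath2} follows by applying \eqref{Ath1} to the analytic function $if(z)$ and separating real and imaginary parts (noting that $\Re(if) = -\Im f$, $\Im(if) = \Re f$, and $i\Im(if(i)) + \tfrac{1}{\pi i}\,(A)\!\int \cdots(-\Im f) = \Re f(i) + \tfrac{1}{\pi}(A)\!\int\cdots \Im f$ after multiplying through by $-i$). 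For \eqref{Ath1}, the first task is to show that $\Re f$ is A-integrable with respect to $\frac{d\la}{1+\la^2}$; membership in $L^1_{w,0}$ is assumed, so what remains is existence of the limit defining the A-integral.

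The key step is the following. For $b>0$ and $B>\infty$ consider the truncation $g_{b,B}(\la) = \Re f(\la)\,\mathbf{1}_{\{b\le |\Re f(\la)|\le B\}}$, which is a genuine $L^1(\bbR;\frac{d\la}{1+\la^2})$ function (indeed bounded with the right tail decay). The plan is to express $\int_\bbR \frac{1+\la z}{\la - z} g_{b,B}(\la)\frac{d\la}{1+\la^2}$ as a contour/Poisson integral and compare it with $f(z)$ using the standard fact that a function analytic in $\bbC_+$ whose boundary values lie in $L^1(\bbR;\frac{d\la}{1+\la^2})$ and which satisfies a mild growth bound is recovered by the Herglotz formula. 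Concretely, I would first handle the case where $f$ itself has boundary values in $L^1(\bbR;\frac{d\la}{1+\la^2})$ (so no truncation is needed and \eqref{Ath1} is the classical formula, e.g.\ from the Herglotz/Akhiezer representation already cited in Remark~\ref{remcon} and used in Lemma~\ref{l1}), and then pass to the general case by a normal-families argument: the truncated integrals define analytic functions $F_{b,B}(z)$ on $\bbC_+$ that are locally uniformly bounded (using the $L^1_{w,0}$ control of the level sets to bound $\int_{\{|\Re f|\le B\}} \frac{|\Re f|}{1+\la^2}\,d\la$ uniformly in $B$ — this is where the $o(1/t)$ as $t\to\infty$ enters), so a subsequence converges locally uniformly, and the candidate limit must agree with $i\Im f(i) + (f(z) - i\Im f(i))$, i.e.\ with $f(z)$, on the boundary via the weak-$*$ identification of measures. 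Once the locally uniform limit of $F_{b,B}$ is identified as $f(z) - i\Im f(i)$ along one sequence $(b,B)\to(0,\infty)$, a monotonicity/uniqueness argument forces the full limit to exist, which simultaneously establishes A-integrability of $\Re f$ and the formula \eqref{Ath1}.

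The main obstacle I expect is making the passage to the limit rigorous — specifically, showing that the limit $\lim_{b\to 0, B\to\infty} \int_{\{b\le|\Re f|\le B\}} \frac{1+\la z}{\la-z}\Re f(\la)\frac{d\la}{1+\la^2}$ exists as a genuine (two-parameter) limit and not merely along a subsequence, and that it equals $\pi i (f(z) - i\Im f(i))$ for \emph{every} $z\in\bbC_+$ rather than just almost everywhere or in some averaged sense. The $t\to 0^+$ half of the $L^1_{w,0}$ condition controls the inner truncation: one needs $\int_{\{|\Re f|\le b\}} \frac{|\Re f|}{1+\la^2}\,d\la \to 0$ as $b\to 0^+$, which again follows from the layer-cake formula together with $\mu\{|\Re f|>t\} = o(1/t)$ as $t\to 0^+$ (automatic here since $\frac{d\la}{1+\la^2}$ is finite). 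So both truncation errors are controlled by the defining property of $L^1_{w,0}$, and the remaining work is to combine these bounds with the Herglotz formula for the honestly-integrable pieces. I would carry the argument out by first fixing $z$, writing $f(z) - i\Im f(i) - F_{b,B}(z) = \tfrac{1}{\pi i}\int_\bbR \frac{1+\la z}{\la-z}\big(\Re f(\la) - g_{b,B}(\la)\big)\frac{d\la}{1+\la^2}$ once the $L^1$ case is in hand, and then estimating the right side directly by splitting into the regions $\{|\Re f|<b\}$ and $\{|\Re f|>B\}$ and using that $\big|\frac{1+\la z}{\la - z}\big|\cdot\frac{1}{1+\la^2}$ is bounded on $\bbR$ for fixed $z\in\bbC_+$; the two pieces then tend to $0$ as $b\to 0$ and $B\to\infty$ respectively, uniformly for $z$ in compact subsets of $\bbC_+$, which yields both the existence of the A-integral and the identity \eqref{Ath1}.
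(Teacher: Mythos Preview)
Your argument has a genuine gap at its core: the step ``once the $L^1$ case is in hand'' is circular. You define $F_{b,B}$ as the Herglotz-type integral of the truncation $g_{b,B}$ and then want to write
\[
f(z) - i\Im f(i) - F_{b,B}(z) \;=\; \frac{1}{\pi i}\int_\bbR \frac{1+\la z}{\la-z}\bigl(\Re f(\la) - g_{b,B}(\la)\bigr)\frac{d\la}{1+\la^2},
\]
but this identity already presupposes that $f(z)-i\Im f(i)$ equals the Herglotz integral of $\Re f$, which is exactly the statement to be proved. Truncating $\Re f$ produces an $L^1$ function, but it does not produce an analytic function you can subtract from $f$ and to which the ``$L^1$ case'' applies; the difference $f-F_{b,B}$ is analytic, yet you have no a~priori representation for it either. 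Nor is the ``$L^1$ case'' itself the Herglotz/Akhiezer theorem of Remark~\ref{remcon}: that representation requires $\Im M\ge 0$, not merely $L^1$ boundary data. Without an additional structural hypothesis (membership in the Smirnov class $N^+$, or being the Cauchy transform of a finite measure), an analytic function on $\bbC_+$ is \emph{not} determined by its boundary values via a Poisson/Herglotz integral; for instance $f(z)=e^{-iz}$ has unimodular boundary values in $L^1_{w,0}\bigl(\bbR;\tfrac{d\la}{1+\la^2}\bigr)$, yet at $z=i$ the right-hand side of \eqref{Ath1} evaluates to $1/e$ while $f(i)=e$. Some nontrivial input is therefore indispensable.

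The paper's proof supplies that input by transferring to the disk via the Cayley transform $F(w)=f\bigl(i\tfrac{1-w}{1+w}\bigr)$ and invoking Aleksandrov's theorem \cite[Theorem~2.3.6]{CMR06} as a black box: for the relevant class of analytic functions with $L^1_{w,0}(\partial\bbD)$ boundary values, that theorem simultaneously establishes $A$-integrability and the Cauchy formula \eqref{Ath3}. The rest of the paper's argument is algebra---combining \eqref{Ath3} with its conjugate \eqref{Ath4} to isolate $\Re F$ and obtain the Schwarz-type formula \eqref{Ath7}, then undoing the change of variables. All of the analytic content resides in Aleksandrov's theorem; your truncation scheme is in effect an attempt to reprove that theorem by elementary means, and as written it omits precisely the step that ties $f$ to its boundary data.
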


\begin{proof}
Let $F(z)=f(i\frac{1-z}{1+z})$. Then $F(z)$ is analytic on the unit
disk $\mathbb D$ and its boundary value is a function in $L^1_{w,0}(\partial\bb D)$. By Aleksandrov's theorem \cite[Theorem~2.3.6]{CMR06}, the function $F$ is $A$-integrable on $\partial\bb D$ with respect to the Lebesgue measure and
\begin{align}
F(z)=\frac{1}{2\pi i} \, (A)\!\!\int_{\partial\mathbb D} \frac{F(w)}{w-z}\,dw
= (A)\!\!\int_0^{2\pi} \frac{F(e^{i\theta})}{1-e^{-i\theta}z}
\,\frac{d\theta}{2\pi}, \quad z\in\mathbb D. \lb{Ath3}
\end{align}
In particular, applying \eqref{Ath3} to the function
$\frac{F(w)}{1-w\ol z}$ we get
\begin{align}
\ol{F(0)} = \ol{(A)\!\!\int_0^{2\pi}
\frac{F(e^{i\theta})}{1-e^{i\theta}\ol z} \,\frac{d\theta}{2\pi}}
= (A)\!\!\int_0^{2\pi} \frac{\ol{F(e^{i\theta})}}{1-e^{-i\theta}z}
\,\frac{d\theta}{2\pi}. \lb{Ath4}
\end{align}
Adding \eqref{Ath3} and \eqref{Ath4} yields,
\begin{align}
F(z)+\ol{F(0)} &= (A)\!\!\int_0^{2\pi} \frac{2\Re
F(e^{i\theta})}{1-e^{-i\theta}z} \,\frac{d\theta}{2\pi} \lb{Ath5}
\\ &=
(A)\!\!\int_0^{2\pi} \frac{1+e^{-i\theta}z}{1-e^{-i\theta}z} \Re
F(e^{i\theta}) \,\frac{d\theta}{2\pi} + (A)\!\!\int_0^{2\pi} \Re
F(e^{i\theta}) \,\frac{d\theta}{2\pi} \no
\\&=
(A)\!\!\int_0^{2\pi} \frac{1+e^{-i\theta}z}{1-e^{-i\theta}z} \Re
F(e^{i\theta}) \,\frac{d\theta}{2\pi} + \Re F(0), \quad z\in\mathbb D,
\lb{Ath6}
\end{align}
where \eqref{Ath5} with $z=0$ was used to evaluate the last integral. Thus,
\begin{align}
F(z)=i\Im F(0) + (A)\!\!\int_0^{2\pi}
\frac{e^{i\theta}+z}{e^{i\theta}-z} \,\Re F(e^{i\theta})
\,\frac{d\theta}{2\pi}, \quad z\in\mathbb D. \lb{Ath7}
\end{align}
Rewriting \eqref{Ath7} in terms of $f(z)$ and changing variables under
the integral yield \eqref{Ath1}. Replacing $f(z)$ by $if(z)$ in
\eqref{Ath1} gives \eqref{Ath2}.
\end{proof}

\begin{proof}[Proof of Theorem \ref{logdet}]
Let $f(z)=\log(\text{det}_{H/H_0}(z))$, then $\Re f(\la+i0)=\zeta(\la)$ and $\Im f(\la+i0)=\pi\xi(\la)$, $\la\in\bb R$. By Theorem~\ref{uhpthm}, the function $\zeta(\la)$ is in $L^1(\bb R)$ and hence $\frac{1+z\lambda}{\lambda-z}\Re f(\lambda+i0)$ is in $L^1(\bb R)\subset L^1_{w,0}(\bbR,\f{d\la}{1+\la^2})$, $\Im(z)\ne 0$. Moreover, since the spectral shift function  $\xi(\la)$ is the Hilbert transform of the $L^1(\bb R)$ function $\zeta(\la)$, it follows from \cite[(1.6)]{PSZ10} that $\xi(\la)$ satisfies \eqref{L1w0} as $t\to\infty$ and hence so does the function $\frac{1+z\lambda}{\lambda-z}\Im f(\lambda+i0)$, $\Im(z)\ne 0$. Since the measure $\f{d\la}{1+\la^2}$ is finite, it follows that $\frac{1+z\lambda}{\lambda-z}\Im f(\lambda+i0)$ is in $L^1_{w,0}(\bbR,\f{d\la}{1+\la^2})$, $\Im(z)\ne 0$. Thus, $f(z)$ satisfies the assumptions of Lemma~\ref{AintLem} and so \eqref{Aint} follows from \eqref{Ath2}.
\end{proof}

\section{A trace formula}

We will now derive a trace formula for rational functions that may have poles in both $\bbC_+$ and $\bbC_-$.
Denote \[\mathcal{F}=\text{span}\big\{\lambda\mapsto (\lambda-z)^{-k}:\, k\in\bbN,\, z\in \rho(H_0)\cap\rho(H)\cap(\bbC\setminus\bbR)\big\}.\]
Let $\mathcal{P_\pm}$ be the orthogonal projections onto the Hardy spaces $H^2_\pm(\bbR)$ \cite[Chapter~VI]{Ko08}.

\begin{theorem}
\label{thmTF}
Suppose that $H_0=H_0^*$, $0\le V=V^*\in S^1$, and let $H=H_0-iV$. Then,
\begin{align}
\label{trfimpr}
\nonumber
&\tr(f(H)-f(H_0))=
\sum_k  \left ((\mathcal{P}_+f)(z_k)-(\mathcal{P}_+f)(\overline{z_k})\right)
+(A)\int_{\bbR} f'(\lambda)\xi(\lambda)\,d\lambda
\\&\qquad
+\frac{1}{\pi i}\int_{\bbR}(\mathcal{P_+}f')(\lambda)(1+\lambda^2)\,d\mu(\lambda)
-ia\,\text{\rm Res}\, \vert_{w=\infty}\, (\mathcal{P}_+f)(w),
\end{align}
for $f\in\mathcal{F}$, where $a$, $\mu$ are as in \eqref{dddd} and $z_k$ are eigenvalues of $H$.
\end{theorem}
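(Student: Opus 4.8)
The plan is to start from the perturbation determinant identity
$$\tr(f(H)-f(H_0)) = \frac{1}{2\pi i}\oint f(w)\,\frac{d}{dw}\log{\det}_{H/H_0}(w)\,dw,$$
valid for $f\in\calF$, where the contour surrounds the spectra of $H$ and $H_0$. Since $f\in\calF$ is a rational function vanishing at infinity with poles only off the real axis, I would split $f = \calP_+ f + \calP_- f$ into its components analytic in $\bbC_-$ and $\bbC_+$ respectively. The idea is then to deform the contour for the $\calP_+ f$ piece down onto $\bbR$ from above and for the $\calP_- f$ piece up onto $\bbR$ from below, picking up residues at the poles of $\log{\det}_{H/H_0}$ — these come from the zeros of ${\det}_{H/H_0}$, i.e.\ the eigenvalues $z_k$ of $H$ in $\bbC_-$ (recall ${\det}_{H/H_0}$ is outer, hence zero-free, in $\bbC_+$ by Theorem~\ref{uhpthm}). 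Each such pole of $f'(w)\log{\det}_{H/H_0}(w)$ — equivalently, the logarithmic derivative term — contributes $(\calP_+ f)(z_k)-(\calP_+ f)(\ol{z_k})$ after accounting for the reflection, which accounts for the first sum.

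Next I would substitute the two exponential representations. On the upper half-plane boundary I use Theorem~\ref{uhpthm}: $\log{\det}_{H/H_0}(\la+i0)=\zeta(\la)+i\pi\xi(\la)$ with $\zeta\in L^1(\bbR)$. The real part $\zeta$ integrates against $(\calP_- f)'$ (a genuine Lebesgue integral since $\zeta$ is summable) and, by a symmetry/cancellation argument between the $\calP_+$ and $\calP_-$ contributions using \eqref{wh}, the $\zeta$-pieces combine with the analogous terms from the lower half-plane factorization \eqref{HH*} so that only the imaginary part survives against $f'$ on the real line. This is where the $(A)\int_\bbR f'(\la)\xi(\la)\,d\la$ term emerges: since $\xi\notin L^1(\bbR)$ in general but $\xi\in(A)L^1(\bbR,\f{d\la}{1+\la^2})$ (Theorem~\ref{logdet} and the discussion preceding it), the relevant integral must be interpreted as an $A$-integral, and one invokes the boundary behaviour of $f'$ (which is $O(\la^{-2})$) together with Lemma~\ref{AintLem} to make sense of and evaluate $\lim_{n}\int f'\xi_n$ — the $\xi_n$ approximants from \eqref{maxim}–\eqref{DetLimXi} — as the $A$-integral.

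The remaining two terms come from the lower half-plane factorization \eqref{HH*} of ${\det}_{H/H^*}$: the singular measure $\mu$ contributes, via $\frac{d}{dw}\frac{1}{\pi i}\int_\bbR\frac{1+\la w}{\la-w}d\mu(\la)$ and the contour deformation onto $\bbR$, the term $\frac{1}{\pi i}\int_\bbR(\calP_+ f')(\la)(1+\la^2)\,d\mu(\la)$ (the factor $1+\la^2$ arising because $\frac{d}{dw}\frac{1+\la w}{\la-w}=\frac{1+\la^2}{(\la-w)^2}$); and the exponential linear term $e^{-iaw}$ contributes $-ia\,\text{\rm Res}\,|_{w=\infty}(\calP_+ f)(w)$ by the residue at infinity of $f$ against the constant logarithmic derivative $-ia$. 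The Blaschke product $B(w)$ supplies exactly the eigenvalue sum already accounted for, so one must be careful not to double-count: I would organize the bookkeeping so that $B$ and the zeros of ${\det}_{H/H_0}$ are identified once and for all at the start.

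The main obstacle I anticipate is justifying the passage to the $A$-integral rigorously — i.e.\ showing $\lim_{n\to\infty}\int_\bbR f'(\la)\xi_n(\la)\,d\la = (A)\int_\bbR f'(\la)\xi(\la)\,d\la$. This requires controlling the tails where $|\xi_n|$ is large using the weak-$L^1$ bound from \cite[(1.6)]{PSZ10} (uniform in $n$, since $\zeta_n\to\zeta$ in $L^1$), combined with equi-integrability of the bounded part, so that truncation at levels $b\le|\xi|\le B$ commutes with the limit. One must also verify that the contour-deformation steps are legitimate despite the logarithmic derivative only having boundary values in a weak sense — this is handled by first proving the formula for $f\in\calF$ with poles bounded away from $\bbR$ and a truncated/regularized determinant, then removing the regularization using the uniform-on-compacts convergence from \cite[Theorem~3.4]{Simon} and Theorem~\ref{uhpthm}.
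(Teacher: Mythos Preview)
Your outline is plausible in spirit but diverges from the paper's argument in two essential ways, and the second of these is precisely where your acknowledged ``main obstacle'' lies.

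First, the paper does not use contour integration at all. Instead it works pointwise with the generators $f_{z,p}(\lambda)=(\lambda-z)^{-p}$: starting from $\tr\big((H-z)^{-1}-(H_0-z)^{-1}\big)=-\frac{d}{dz}\log{\det}_{H/H_0}(z)$, it simply differentiates the explicit exponential representations of Theorem~\ref{uhpthm} (for $z\in\bbC_+$) and Theorem~\ref{lhpl} (for $z\in\bbC_-$) to obtain closed formulas for $\tr(f_{z,p}(H)-f_{z,p}(H_0))$. The eigenvalue sum, the $\mu$-integral and the $ia$ term drop out immediately from the derivative of the Blaschke product, the Herglotz kernel, and the linear exponent in \eqref{dddd}; no residue calculus or contour deformation is needed, and the unboundedness of $H_0$ never becomes an issue. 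The two cases are then unified by observing that $\calP_+ f'_{z,p}$ and $\calT f'_{z,p}$ are just $f'_{z,p}$ times $0$ or $\pm1$ depending on the half-plane containing $z$, which yields a single formula containing the term $-\frac{1}{\pi}\int_\bbR(\calT f')(\la)\zeta(\la)\,d\la$.

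Second --- and this is the point you should take away --- the passage to the $A$-integral is \emph{not} done via the approximants $\xi_n$. The paper invokes a duality theorem of Anter (also Ul'yanov on the circle): for $\phi\in L^p\cap L^\infty$ with $\calT\phi\in L^\infty$ and $h\in L^1$, one has $\int_\bbR h\,(\calT\phi)\,dx = -(A)\!\int_\bbR(\calT h)\,\phi\,dx$. Applying this with $h=\zeta$ and $\phi=f'$ (so $\calT\zeta=\pi\xi$) converts $-\frac{1}{\pi}\int(\calT f')\zeta$ into $(A)\!\int f'\xi$ in one line. Your plan to control $\lim_n\int f'\xi_n$ via uniform weak-$L^1$ bounds and equi-integrability would, in effect, be reproving this duality from scratch; it can be made to work, but it is the hard way round, and the referee would likely ask why you did not simply cite the existing result.
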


\begin{proof}
By the known argument (see, e.g., \cite[Chapter~IV, \S~3.2, Prop.~5]{GK}),
\begin{equation}\label{gentrace}
\tr\big((H-z)^{-1}-(H_0-z)^{-1}\big)=-\frac{\frac{d}{dz}{\det}_{H/H_0}(z)}{{\det}_{H/H_0}(z)},  \quad z\in
\rho(H)\cap\rho(H_0).
\end{equation}
Therefore, by Theorem \ref{uhpthm},
\begin{equation}\label{step1}
\tr\big((H-z)^{-1}-(H_0-z)^{-1}\big)=-\frac{1}{\pi i} \int_{\bbR}\frac{\zeta(\lambda)}{(\lambda- z)^2}\,d\lambda,\quad z\in\bbC_+.
\end{equation}
By the representation \eqref{gentrace} and Theorem \ref{lhpl},
\begin{align}
\label{tfla}
&\tr\big((H-z)^{-1}-(H_0-z)^{-1}\big)
=-\sum_k \bigg(\frac{1}{z-z_k}
-\frac{1}{z-\overline{z_k}}\bigg) \no\\
&\qquad
+\frac{1}{\pi i} \int_{\bbR} \frac{\zeta(\lambda)}{(\lambda- z)^2}\,d\lambda
-\frac{1}{\pi i} \int_{\bbR}\frac{1+\lambda^2}{(\lambda-z)^2}\,d\mu(\lambda)+ia, \quad z\in\rho(H)\cap\bbC_-.
\end{align}

Next we combine \eqref{step1} and \eqref{tfla} to obtain \eqref{trfimpr}. Denote
\begin{align}
\label{fz}
f_{z,p}(\lambda)=(\lambda-z)^{-p},\quad p\in\bbN,\; z\in\rho(H_0)\cap\rho(H)\cap(\bbC\setminus\bbR).
\end{align}
The series $\sum_k \left (\frac{1}{z-z_k}-\frac{1}{z-\overline{z_k}}\right )$ converges uniformly on every compact subset of $\rho(H_0)\cap\rho(H)\cap(\bbC\setminus\bbR)$. By differentiating \eqref{step1} and \eqref{tfla} with respect to $z$, we obtain
\begin{align}
\label{trf}
&\tr\big(f_{z,p}(H)-f_{z,p}(H_0)\big)=
\frac{1}{\pi i} \int_{\bbR}
f_{z,p}'(\lambda)\zeta(\lambda)\,d\lambda\times \begin{cases}
\,\,\,\,1,& z\in\bbC_+\\
-1,&z\in\bbC_-
\end{cases} \no\\
&\quad
+\left (ia+\frac{1}{\pi i}\int_{\bbR}f_{z,p}'(\lambda)(1+\lambda^2)\,d\mu(\lambda)+\sum_k  \big(f_{z,p}(z_k)-f_{z,p}(\overline{z_k})\big)\right) \no\\
&\qquad
\times \begin{cases}
0,& z\in\bbC_+\\
1,&z\in\bbC_-
\end{cases}
\end{align}
for every $p\in\bbN$, $z\in \rho(H_0)\cap\rho(H)\cap(\bbC\setminus\bbR)$, where $a$, $\mu$ are as in \eqref{dddd}.

Denote by $\mathcal{T}$ the Hilbert transform on $ L^2(\bbR)$, respectively, so that
$\mathcal{T}=\frac1i (\mathcal{P_+}-\mathcal{P_-})$.
Fix $z\in \rho(H_0)\cap\rho(H)\cap(\bbC\setminus\bbR)$ and $p\in\bbN$.
It is easy to see that for $f_{z,p}$ given by \eqref{fz},
$$\mathcal{P}_+f'_{z,p}=f'_{z,p}\times \begin{cases}
0, & z\in \bbC_+\\
1,&z\in \bbC_-
\end{cases}
$$
and that
$$\mathcal{T}f'_{z,p}=f'_{z,p}\times \begin{cases}
-1, & z\in \bbC_+\\
\,\,\,\,\,1,&z\in \bbC_-
\end{cases}.
$$
Hence, the trace formula \eqref{trf} can be rewritten  via the Hilbert transform:
\begin{align}
\label{trtrtr}
\nonumber
&\tr(f_{z,p}(H)-f_{z,p}(H_0))=
\sum_k  \left ((\mathcal{P}_+ f_{z,p})(z_k)-(\mathcal{P}_+ f_{z,p})(\overline{z_k})\right) \no\\
&\qquad
-\frac{1}{\pi}\int_{\bbR} (\mathcal{T}f'_{z,p})(\lambda)
\zeta(\lambda)\,d\lambda
+\frac{1}{\pi i}\int_{\bbR}(\mathcal{P_+}f'_{z,p})(\lambda)(1+\lambda^2)\,d\mu(\lambda)
\\&\qquad
-ia\,\text{Res}\, \vert_{w=\infty}\,
\left (\mathcal{P}_+f_{z,p}(w)\right).\nonumber
\end{align}
It is proved  in \cite{Anter} that if $\phi\in L^p(\bbR)\cap L^\infty(\bbR)$, $p\geq 1$, with $\mathcal{T}(\phi)\in L^\infty(\bbR)$, and $h\in L^1(\bbR)$,
\[\int_\bbR h(x)(\mathcal{T}\phi)(x)\,dx=-(A)\int_\bbR(\mathcal{T}h)(x)\phi(x)\,dx\]
(see also \cite{U2} for the analogous result on the unit circle),
which along with \eqref{trtrtr} gives us \eqref{trfimpr} for $f=f_{z,p}$. Taking linear combinations of functions $f_{z,p}$ extends \eqref{trfimpr} to all $f\in\mathcal{F}$.
\end{proof}

\begin{remark}
\label{partial}
(i)
If $H$ and $H_0$ is a pair of self-adjoint operators with $H-H_0\in S^1$, then the analog of \eqref{trfimpr} has a simpler form:
\begin{equation}
\label{classics}
\tr\big(f(H)-f(H_0)\big)=\int_\bbR f'(\lambda)\xi(\lambda)\,d\lambda,
\end{equation}
as established in \cite{Krein}. A detailed list of references on \eqref{classics} can be found in the surveys \cite{BY,S}; references on higher order trace formulas can be found in \cite{S}. Attempts to extend the trace formula \eqref{classics} to accumulative operators $H_0$ and $H$ resulted in consideration of only selected pairs of accumulative $H_0$ and $H$ and led to modification of either the left or right hand side of  \eqref{classics} 
\cite{Sah68,AdP79,R84,N87,N88,Kr89,AdN90}. It is also known \cite{dsD} that for every pair of maximal accumulative operators $H_0$ and $H$, with $H-H_0\in S^1$, there exists a finite measure $\mu$ such that $\|\mu\|\leq\|V\|_1$ and
\begin{align*}
&\tr\big(f(H)-f(H_0)\big)=\int_\bbR f'(\lambda)\,d\mu(\lambda),\\
&f\in\text{span}\{\lambda\mapsto (z-\lambda)^{-k}:\, k\in\bbN,\, z\in\bbC_+\}.
\end{align*}

(ii)
By adjusting the reasoning in the proof of Theorem \ref{thmTF} to the perturbation determinant ${\det}_{H/H^*}(z)$, we obtain that for $H_0=H_0^*$, $0\le V=V^*\in S^1$, and  $H=H_0-iV$,
\begin{align}
\label{trfimpr*}
\nonumber
&\tr(f(H)-f(H^*))=
\sum_k \big(f(z_k)-f(\overline{z_k})\big)
\\&\qquad
+\frac{1}{\pi i}\int_{\bbR}f'(\lambda)(1+\lambda^2)\,d\mu(\lambda)
-ia\,\text{\rm Res}\, \vert_{w=\infty}\,(f(w)),
\end{align}
for rational functions $f\in C_0(\bbR)$ with poles in $\rho(H)\cap\rho(H^*)\cap\bbC_-$, where $a$, $\mu$ are as in \eqref{dddd}. By taking complex conjugation in \eqref{trfimpr*}, we extend the formula to all rational functions $f\in C_0(\bbR)$ with poles in $\rho(H)\cap\rho(H^*)$. The formula \eqref{trfimpr*} was obtained in \cite[Theorem~1]{AdP79} using a functional model of accumulative operators and in \cite[Theorem~8.3 and 8.4]{Kr89} via the perturbation determinant.
A similar formula for bounded dissipative operators with absolutely continuous spectrum was obtained earlier in \cite{Sah68}.

(iii)
Under the assumptions of Theorem \ref{thmTF}, we also have the trace formula
\[\tr\big(f(H)-f(H_0)\big)=\frac{1}{\pi i}\int_\bbR f'(\lambda)\zeta(\lambda)\,d\la,\]
where $f$ is a rational function with poles in $\bbC_+$. This follows from the formula \eqref{step1}, which also appeared in \cite[Theorem~6.6]{NM}.

(iv)
Since the functions $\pi\xi(\la)$ and $\zeta(\la)$
are harmonic conjugates of each other, one can avoid appearance of the $A$-integral in the trace formula \eqref{trfimpr} using the equality
$$(A)\int_{\bbR} f'(\lambda)\xi(\lambda)\,d\lambda
=-\frac{1}{\pi } \int_{\bbR} (\mathcal{T}f')(\lambda)\zeta(\lambda)\,d\lambda,
$$
with $\mathcal{T}$ the Hilbert transform and standard Lebesgue integral on the right-hand side.

(v)
The trace formula \eqref{trfimpr} is an accumulative analog of a regularized trace formula obtained by A. Rybkin in \cite{R94} for  contractive trace class perturbations of a unitary operator. However, it is worth mentioning that Rybkin's approach requires a concept of a spectral shift distribution and invokes B-integration in the corresponding trace formula.
\end{remark}

\section{Non-integrability of the Spectral Shift Function}

In this  concluding section we discuss two important examples that emphasize some properties of the spectral shift function  that are not available in the standard trace class perturbation theory for self-adjoint operators.

We start with the observation that since $\xi$ is the Hilbert transform of an integrable function, one automatically has that $\xi\in L^1_w(\bbR)$, the weak $L^1$ space. However, the following example shows that $\xi\notin L^1_{w,0}(\bbR)\subset L^1_w(\bbR)$.

\begin{example}(cf. \cite[Ex.~3.6]{N88}) \lb{notL1}
Let $H_0=0$ and $H=-\alpha i P$, where $\alpha>0$ and $P$ is a rank one orthogonal projection. The function $\xi$ for the pair $H$ and $H_0$ can be computed explicitly
\begin{align}\label{xieq}
\xi(\lambda)=\frac1\pi
\lim_{\varepsilon\to 0^+} \Im \left
 (\log (1+i\alpha(\lambda+i\varepsilon)^{-1})\right )
= \frac1\pi \arctan \left (\frac{\alpha}{\lambda}\right ),
\end{align}
and hence, $\xi\notin L^1_{w,0}(\bbR)$ since $\arctan(a/\la)\sim a/\la$ as $\la\to\infty$. Note that the function $\zeta$ from Lemma \ref{l1} is given by
\begin{equation}\label{zetaeq}
\zeta(\lambda)=
\lim_{\varepsilon\to 0^+} \Re \left
 (\log (1+i\alpha(\lambda+i\varepsilon)^{-1})\right )
= \log\sqrt{1+\frac{\alpha^2}{\lambda^2}},
\end{equation}
and, therefore, $\zeta\in L^1(\bbR)$.
\end{example}

In fact, the phenomenon of $\xi\notin L^1_{w,0}(\bbR)$ observed in Example \ref{notL1} is of general character.
As Theorem \ref{notl1} below shows, the spectral shift function $\xi(\lambda)=\xi(\lambda, H_0, H)$, being the Hilbert transform of a nonnegative integrable function $\zeta(\lambda)$, is never an element of $ L^1_{w,0}(\bbR)$, unless the operator $H$ is also self-adjoint. A weaker statement that in the context of an accumulative perturbation the spectral shift function $\xi$ is necessarily not in $L^1(\bbR)$, follows from  the claim  in \cite[6.1, p.~48]{Stein} that the Hilbert transform of a positive $L^1(\bbR)$ function is not in $L^1(\bbR)$.

\begin{theorem}\label{notl1}
If $f\in L^1(\bbR)$ is such that $\int_\bbR f(y)\,dy\neq 0$, then the Hilbert transform of $f$,
\begin{align}
g(x)=\text{\rm p.v.}\!\!\int_\bbR\f{f(y)}{y-x}\,dy,
\end{align}
is not in $L^1_{w,0}(\bbR)$, and in particular, not integrable.
\end{theorem}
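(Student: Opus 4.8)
The plan is to prove the sharper quantitative fact
\[
\liminf_{t\to 0^+}\; t\,\bigl|\{x\in\bbR:|g(x)|>t\}\bigr| \;>\;0,
\]
which immediately gives $g\notin L^1_{w,0}(\bbR)$, and from which non-integrability also follows since $L^1(\bbR)\subseteq L^1_{w,0}(\bbR)$ (for $h\in L^1$ the layer-cake identity $\int_0^\infty|\{|h|>s\}|\,ds=\|h\|_1<\infty$ yields $t\,|\{|h|>t\}|\le\int_0^t|\{|h|>s\}|\,ds\to0$ as $t\to0^+$, and likewise as $t\to\infty$). Set $c=\int_\bbR f\neq0$. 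The guiding heuristic is that $g(x)\sim-c/x$ as $|x|\to\infty$; this can fail pointwise — the Hilbert transform of an $L^1$ function may be unbounded near a sequence tending to $\infty$ — so I would instead establish these asymptotics ``in density'' over dyadic annuli. The starting point is the identity, valid for a.e.\ $x$,
\[
x\,g(x)+c=\mathrm{p.v.}\!\int_\bbR\frac{y\,f(y)}{y-x}\,dy=:G(x),
\]
coming from $x\,g(x)=\mathrm{p.v.}\!\int\frac{x f(y)}{y-x}\,dy$, $c=\int f$, and $\frac{x}{y-x}+1=\frac{y}{y-x}$. If $|G(x)|<|c|/2$, then $|x|\,|g(x)|=|G(x)-c|>|c|/2$, i.e.\ $|g(x)|>\frac{|c|}{2|x|}$. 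Hence the statement reduces to showing that on each annulus $A_k=\{2^k\le|x|<2^{k+1}\}$ the set $\{x\in A_k:|G(x)|\ge|c|/2\}$ has measure $o(|A_k|)$ as $k\to\infty$: unioning the complementary ``good'' parts of $A_{k_0},\dots,A_{k-1}$ and choosing, for small $t$, the index $k$ with $|c|2^{-k-2}\le t<|c|2^{-k-1}$ then gives $t\,|\{|g|>t\}|\ge(1-o(1))\,|c|/2$.

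To estimate $G$ on $A_k$, I would split $G=G_1+G_2$ with $G_1(x)=\int_{|y|\le|x|/2}\frac{yf(y)}{y-x}\,dy$ and $G_2(x)=\mathrm{p.v.}\!\int_{|y|>|x|/2}\frac{yf(y)}{y-x}\,dy$. On the range of $G_1$ one has $|y-x|\ge|x|/2$, so $|G_1(x)|\le\frac{2}{|x|}\int_{|y|\le|x|/2}|y||f(y)|\,dy$, which $\to0$ uniformly as $|x|\to\infty$ (split the last integral at $\sqrt{|x|/2}$ and use $f\in L^1$). In $G_2$, write $\frac{y}{y-x}=1+\frac{x}{y-x}$ to get $G_2(x)=\int_{|y|>|x|/2}f+x\,(Hf_x)(x)$, where $f_x:=f\chi_{\{|y|>|x|/2\}}$ and $H$ is the Hilbert transform; the first term is $\le\int_{|y|>|x|/2}|f|\to0$. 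The remaining term $x\,(Hf_x)(x)$ is the crux, and here I would freeze the truncation annulus by annulus: for $x\in A_k$ one has $|x|/2\ge2^{k-1}$, so $f_x$ differs from the \emph{fixed} function $f^{(k)}:=f\chi_{\{|y|>2^{k-1}\}}$ by a function supported in $\{2^{k-1}<|y|\le|x|/2\}$, which lies at distance $\ge|x|/2$ from $x$; hence $|x|\,|(Hf_x)(x)-(Hf^{(k)})(x)|\le2\int_{|y|>2^{k-1}}|f|$, uniformly small on $A_k$. Since $Hf^{(k)}$ is the Hilbert transform of a fixed $L^1$ function with $\|f^{(k)}\|_1=\int_{|y|>2^{k-1}}|f|\to0$, Kolmogorov's weak-type $(1,1)$ inequality gives $|\{|Hf^{(k)}|>s\}|\le C s^{-1}\|f^{(k)}\|_1$; taking $s$ of order $|c|2^{-k}$ (the least value of $|c|/(4|x|)$ on $A_k$) bounds $|\{x\in A_k:|x\,(Hf^{(k)})(x)|\ge|c|/4\}|$ by a constant multiple of $2^k\|f^{(k)}\|_1=o(|A_k|)$. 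Gathering the uniformly small contributions with this estimate yields $|\{x\in A_k:|G(x)|\ge|c|/2\}|=o(|A_k|)$, which is what is needed.

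The main obstacle will be precisely the term $x\,(Hf_x)(x)$: no pointwise bound is available, and the $x$-dependent cutoff $f_x$ blocks a direct appeal to the weak-$(1,1)$ inequality. The resolution is the freezing of the cutoff on a single dyadic annulus at a uniformly negligible cost, after which the relevant fixed function has $L^1$-mass that is $o(1)$ compared with the width of the annulus, so weak $(1,1)$ suffices. (The more harmonic-analytic-looking route — declaring $g+i\pi f$ the boundary value of a Smirnov-class function on $\bbC_+$ with $L^1_{w,0}$ boundary data decaying like $1/z$ and forcing $\int f=0$ via an Aleksandrov-type $A$-integral Cauchy formula — appears to run into the same difficulty, since the asymptotics $g\sim-c/x$ hold only in measure, so I do not expect it to be simpler than the direct density estimate above.)
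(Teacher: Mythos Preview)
Your proof is correct, and its core idea---isolate the main term $-c/x$ via the identity $xg(x)+c=\text{p.v.}\!\int\frac{yf(y)}{y-x}\,dy$, bound the ``near'' part of the remainder pointwise, and control the ``far'' part through the weak-type $(1,1)$ inequality for the Hilbert transform---is the same as the paper's. The execution differs: the paper truncates at a \emph{fixed} level $M$ from the outset, writing
\[
g(x)=\underbrace{\frac{-1}{x}\int_{-M}^M f}_{g_{0,M}}+\underbrace{\text{p.v.}\!\!\int_{-M}^M\frac{yf(y)}{x(y-x)}\,dy}_{g_{1,M}}+\underbrace{\text{p.v.}\!\!\int_{|y|>M}\frac{f(y)}{y-x}\,dy}_{g_{2,M}}.
\]
Since $g_{2,M}$ is the Hilbert transform of the single fixed function $f\chi_{\{|y|>M\}}$, weak $(1,1)$ applies directly---no freezing step is needed---and $g_{1,M}$ is bounded pointwise by $2M\|f\|_1/|x|^2$ for $|x|>2M$. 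A single level-set inequality then combines the three pieces, and only afterward does one send $M\to\infty$. Your $x$-dependent cutoff at $|x|/2$ is what forces the dyadic-annulus bookkeeping and the freezing of $f_x$ to $f^{(k)}$ that you flagged as the crux; the paper's fixed cutoff sidesteps this entirely. Both routes yield a positive lower bound for $t|\{|g|>t\}|$ as $t\to0^+$ (the paper obtains $2|c|$, you obtain $|c|/2$), but the fixed-$M$ decomposition is noticeably shorter.
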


\begin{proof}
For any $h\in L^1(\bbR)$ it follows from the Dominated Convergence Theorem that
\begin{align}\lb{4.5}
\limsup_{t\to 0^+}t\big|\big\{x:\, |h(x)|>t\big\}\big|
&=\limsup_{t\to 0^+}\int_{\{x:\, |h(x)|>t\}}t\,dx \no \\
&\leq \limsup_{t\to 0^+}\int_\bbR\min\{t,|h(x)|\}\,dx = 0.
\end{align}
Thus, it suffices to show that $g$ does not satisfy \eqref{4.5}. In fact, we will show that
\begin{align}
\limsup_{t\to 0^+}\,t\,\big|\big\{x:\, |g(x)|>t\big\}\big| \geq 2\left|\int_\bbR f(y)\,dy\right|>0.
\end{align}
In the following, we split $g$ into three parts
\begin{align}\lb{g012}
g(x) &= \f{-1}{x}\int_{-M}^M f(y)\,dy + \text{\rm p.v.}\!\!\int_{-M}^M \f{yf(y)}{x(y-x)}\,dy + \text{\rm p.v.}\!\!\int_{|y|>M} \f{f(y)}{y-x}\,dy \no
\\
&=: g_{0,M}(x)+g_{1,M}(x)+g_{2,M}(x).
\end{align}
For any $0<\eps<1/2$ and $t>0$, the inequality $|g_{0,M}(x)|=|g(x)-g_{1,M}(x)-g_{2,M}(x)|>t$ implies that either $|g(x)|>(1-2\eps)t$ or $|g_{1,M}(x)|>\eps t$ or else $|g_{2,M}(x)|>\eps t$. Hence,
\begin{align}\lb{RTrIneq}
|\{x:|g(x)|>(1-2\eps)t|\}| &\geq |\{x:|g_{0,M}(x)|>t\}| \no\\
&\quad - |\{x:|g_{1,M}(x)|>\eps t\}| -|\{x:|g_{2,M}(x)|>\eps t\}|.
\end{align}
Since the function $g_{0,M}(x)$ is a constant multiple of $1/x$, we compute
\begin{align}\lb{g0est}
\lim_{M\to\infty}\limsup_{t\to 0^+}\,t\,|\{x:|g_{0,M}(x)|>t\}| = \lim_{M\to\infty}2\left|\int_{-M}^M f(y)\,dy\right| = 2\left|\int_\bbR f(y)\,dy\right|.
\end{align}
Using the inequality $\left|\int_{-M}^M\f{yf(y)}{x(y-x)}\,dy\right|\leq \f{2M\|f\|_1}{|x|^2}$ for all $|x|>2M$, we estimate
\begin{align}\lb{g1est}
\limsup_{t\to 0^+}\,t\,|\{x:|g_{1,M}(x)|>t\}| \leq \limsup_{t\to 0^+}\, 2t\,\left(2M+\sqrt{\f{2M\|f\|_1}{t}}\,\right) = 0.
\end{align}
Denoting by $C$ the norm of the Hilbert transform as a map from $L^1(\bbR)$ to $L^1_w(\bbR)$, we obtain
\begin{align}\lb{g2est}
\lim_{M\to\infty}\sup_{t>0}\,t\,|\{x:|g_{2,M}(x)|>t\}| \leq \lim_{M\to\infty}C\int_{|y|>M}|f(y)|\,dy = 0.
\end{align}
Finally, combining the above estimates \eqref{RTrIneq}--\eqref{g2est} implies
\begin{align}
\limsup_{t\to 0^+}\,t\,|\{x:|g(x)|>t\}| &= \limsup_{t\to 0^+}\,(1-2\eps)\,t\,|\{x:|g(x)|>(1-2\eps)t\}| \no\\
&\geq(1-2\eps)\lim_{M\to\infty}\limsup_{t\to 0^+}\Big[ t\,|\{x:|g_{0,M}(x)|>t\}| \no\\
&\quad - t\,|\{x:|g_{1,M}(x)|>\eps t\}| - t\,|\{x:|g_{2,M}(x)|>\eps t\}| \Big] \no\\
&\geq (1-2\eps)2\left|\int_\bbR f(y)\,dy\right|.
\end{align}
Since $g$ does not satisfy \eqref{4.5}, it is not in $L^1_{w,0}(\bbR)$ and hence not in $L^1(\bbR)$.
\end{proof}

\begin{remark}
It follows from the proofs of Theorem~\ref{logdet} and Theorem~\ref{notl1} that as long as the perturbation $V$ is not zero, the spectral shift function $\xi(\la)$ satisfies
\begin{align}
\limsup_{t\to\infty}\,t|\{\la:\,|\xi(\la)|>t\}| = 0 \quad\text{and}\quad
\limsup_{t\to0^+}\,t|\{\la:\,|\xi(\la)|>t\}| > 0.
\end{align}
\end{remark}

Our second example shows that the spectral shift function does not even need to be locally integrable.
\begin{example}(cf. \cite[Ex.~3.10]{N88}) \lb{notL1loc}
Let $H_0=0$ and $H=-i\sum_{n=1}^\infty\al_n P_n$, where $\{\al_n\}_{n=1}^\infty$ is a summable sequence of positive numbers so that $\sum_{n=1}^\infty \al_n\ln(\al_n)$ is divergent and $\{P_n\}_{n=1}^\infty$ is a sequence of rank one orthogonal projections such that $P_nP_k=0$ whenever $n\neq k$. As in the previous example, the functions $\xi$ and $\zeta$ for the pair $H$ and $H_0$ can be computed explicitly
\begin{align}
\xi(\lambda)= \frac1\pi \sum_{n=1}^\infty \arctan \left(\frac{\al_n}{\lambda}\right) \quad\text{and}\quad
\zeta(\lambda)= \sum_{n=1}^\infty\log\sqrt{1+\frac{\al_n^2}{\lambda^2}}.
\end{align}
Since $\int_0^1\arctan(\frac{\al_n}{\la})d\la = \frac{\al_n}{2}\ln(1+\al_n^2)+\arctan(\al_n)-\al_n\ln(\al_n)$, it follows from the monotone convergence theorem and the divergence of $\sum_{n=1}^\infty \al_n\ln(\al_n)$ that $\int_0^1 |\xi(\la)|d\la=\infty$. Hence, $\xi$ is not locally integrable and, in particular, not in $L^1(\bbR;\frac{d\la}{1+\la^2})$. On the other hand, since $\int_\bbR \log\sqrt{1+\frac{\al_n^2}{\lambda^2}}d\la = \al_n\int_\bbR \log\sqrt{1+\frac{1}{\lambda^2}}d\la$, it follows from the monotone convergence theorem and the summability assumption on $\{\al_n\}_{n=1}^\infty$ that $\zeta\in L^1(\bbR)$.
\end{example}


\end{document}